\documentclass[preprint,12pt]{elsarticle}

\usepackage[T1]{fontenc}
\usepackage{amsmath,amssymb,amsthm}
\usepackage{hyperref}
\biboptions{numbers,sort&compress}
\hypersetup{colorlinks=true,linkcolor=black,citecolor=black,urlcolor=black,
  pdftitle={The size of the poset of compatible locally quasi-convex topologies on locally compact abelian groups},
  pdfauthor={Zhouxiang Huang and Dekui Peng}}
\journal{Journal of Pure and Applied Algebra}

\newcommand{\R}{\mathbb R}
\newcommand{\Q}{\mathbb Q}
\newcommand{\Z}{\mathbb Z}
\newcommand{\N}{\mathbb N}
\newcommand{\T}{\mathbb T}
\newcommand{\F}{\mathbb F}
\newcommand{\cont}{\mathfrak c}
\newcommand{\C}{\mathcal C}
\newcommand{\Pow}{\mathcal P}
\newcommand{\Hom}{\operatorname{Hom}}
\newcommand{\width}{\operatorname{width}}
\newcommand{\rk}{\operatorname{r}}
\newcommand{\norm}[1]{\left\|#1\right\|}
\newcommand{\pol}{^{\triangleright}}
\newcommand{\prepol}{^{\triangleleft}}

\theoremstyle{plain}
\newtheorem{maintheorem}{Theorem}

\newtheorem{theorem}{Theorem}[section]
\newtheorem{lemma}[theorem]{Lemma}
\newtheorem{proposition}[theorem]{Proposition}
\newtheorem{corollary}[theorem]{Corollary}
\theoremstyle{definition}
\newtheorem{definition}[theorem]{Definition}
\theoremstyle{remark}
\newtheorem{remark}[theorem]{Remark}
\numberwithin{equation}{section}

\begin{document}
\begin{frontmatter}

\title{The size of the poset of compatible locally quasi-convex topologies on locally compact abelian groups}

\author[njnu]{Zhouxiang Huang}
\ead{2715383574@qq.com}

\address[njnu]{Institute of Mathematics, Nanjing Normal University,
Nanjing 210024, China}

\begin{abstract}
For a Hausdorff locally quasi-convex abelian group $G$, let
$\C(G)$ be the poset of all Hausdorff locally quasi-convex group
topologies on its underlying group having the same continuous
characters as $G$. We prove that, whenever $G$ is non-precompact,
$\C(G)$ contains an order-isomorphic copy of
$(\Pow(\cont),\subseteq)$, where $\cont=2^{\aleph_0}$. The embedding
takes values between the Bohr topology and the original topology.
Consequently, every infinite discrete abelian group $D$ satisfies
$|\C(D)|=\width\C(D)=2^{2^{|D|}}$. The discrete reduction for locally
compact abelian groups then yields exact cardinality and width
formulas for all such groups; in particular, both invariants equal
$2^{\cont}$ for every non-compact $\sigma$-compact locally compact
abelian group. These results answer Questions 6.1--6.3 and 6.5--6.7,
and the locally compact case of Question 6.4, posed by
L.~Au{\ss}enhofer and D.~Dikranjan in
\cite{AD20}. The embedding also applies to non-compact complete
metrizable locally quasi-convex groups. A non-compact precompact
nuclear group with a unique compatible topology shows that
non-compactness alone does not suffice in the nuclear setting.
Finally, the compatible poset of $\R^{\N}$ is not order-isomorphic
to that of any discrete abelian group.
\end{abstract}

\begin{keyword}
Compatible topology \sep Locally quasi-convex group \sep
Locally compact abelian group \sep Order embedding \sep
Mackey topology \sep Nuclear group
\MSC[2020] 22A05 \sep 22B05 \sep 20K45 \sep 06A06
\end{keyword}

\end{frontmatter}

\section{Introduction}\label{sec:introduction}

Two group topologies on an abelian group are called compatible if
they have the same continuous characters. The study of compatible
locally quasi-convex topologies is a group-theoretic counterpart of
Mackey theory for locally convex spaces. In this setting, the
continuous dual does not usually determine the topology uniquely.
It is therefore natural to study both the number of compatible
topologies and their order structure.

For a Hausdorff locally quasi-convex abelian group $(G,\tau)$, write
$\widehat G=(G,\tau)^\wedge$ and let $\C(G,\tau)$, or simply $\C(G)$,
denote the set of all Hausdorff locally quasi-convex group topologies
$\nu$ on $G$ satisfying $(G,\nu)^\wedge=\widehat G$, ordered by
inclusion. Its least element is the Bohr topology
$\sigma(G,\widehat G)$. For a poset $P$, its width is
\[
  \width P=\sup\{|A|:A\subseteq P\text{ is an antichain}\}.
\]
Throughout the paper, $\cont=2^{\aleph_0}$ and $\Pow(X)$ denotes
the power set of a set $X$.

Au{\ss}enhofer, Dikranjan, and Mart\'in-Peinador
\cite{ADMP15} obtained maximal cardinality and width for discrete
abelian groups of infinite rank. Au{\ss}enhofer and Dikranjan
\cite{AD20} subsequently showed that the compatible poset of every
locally compact abelian group is order-isomorphic to the compatible
poset of a suitable discrete group. They also constructed large
antichains and chains in the remaining finite-rank cases. Their
Questions 6.1--6.8 ask for exact cardinalities, behavior under finite
products, and extensions beyond the locally compact setting.

We use the $\mathcal S$-topology framework of
\cite[Section 3]{AD20}: the auxiliary topologies are topologies of
uniform convergence on selected subsets of the continuous dual.
Our separation argument also abstracts the use of differences of
selected points in \cite[Propositions 4.14 and 5.5]{AD20}.
To apply this approach to arbitrary non-precompact locally
quasi-convex groups, we use an infinite equicontinuous polar to
produce characters $(f_n)$ and points $(x_n)$ with uniformly small
off-diagonal evaluations and large diagonal evaluations. Uniform
convergence on almost disjoint subfamilies of these characters then
encodes every subset of a set of cardinality $\cont$.

The combinatorial part of this construction follows the framework
of \cite[Theorem 6.9]{ADMP15}, where uniform convergence on selected
coordinates of $c_0(X)$ is combined with an almost disjoint family
to embed $\Pow(\cont)$ into the compatible poset. Our contribution
relative to that argument is the construction of approximate
character coordinates from non-precompactness alone. The resulting
matrix estimates, together with differences of selected points,
replace the coordinate separation available in $c_0(X)$ and also
handle the finite Bohr constraints in the neighborhood test.
For uncountable discrete groups, the exact cardinality formula
below still uses the infinite-rank theorem of
\cite[Theorem 4.5]{ADMP15}.

\begin{maintheorem}\label{main:embedding}
Let $(G,\tau)$ be a non-precompact Hausdorff locally quasi-convex
abelian group. There is an order embedding
\[
  (\Pow(\cont),\subseteq)\longrightarrow\C(G,\tau)
\]
whose image lies in the interval
$[\sigma(G,\widehat G),\tau]$.
\end{maintheorem}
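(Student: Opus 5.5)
Since $\tau$ is not precompact, some $\tau$-neighborhood $U$ of $0$ (quasi-convex, by local quasi-convexity) fails to be precompact. Its polar $U\pol\subseteq\widehat G$ is then equicontinuous, and it must be infinite. Indeed, if $U\pol$ were finite, $U=U\pol\prepol$ would contain the kernel of finitely many characters. That kernel is a Bohr-open subgroup, and so $\tau$ would be... I would rather argue directly that a non-precompact $U$ lets us build inductively characters $f_n\in U\pol$ and points $x_n\in G$ satisfying the following. First, $|f_n(x_n)|$ stays bounded away from $0$ (say $\ge 1/8$ in $\T=\R/\Z$ with the usual norm). Second, the off-diagonal values are summably small: $\sum_{m\neq n}\norm{f_m(x_n)}<\varepsilon$.

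The construction is a Gram--Schmidt-type induction. Given $f_1,\dots,f_{n-1}$ and $x_1,\dots,x_{n-1}$, we choose $x_n$ in the Bohr neighborhood $\{x:\norm{f_m(x)}<\varepsilon 2^{-n}, m<n\}\cap U$. We can still find an $f_n\in U\pol$ with $\norm{f_n(x_n)}\ge 1/8$ and $\norm{f_n(x_m)}<\varepsilon2^{-n}$ for $m<n$. This works because $U$ is not precompact, so it is not contained in any finite-Bohr-constraint set modulo small error. We apply this to $x_n$ taken from $U$ intersected with a finite-index-type Bohr neighborhood, using differences $x-x'$ of points of $U$ that are Bohr-close but $\tau$-far. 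Compactness of $U\pol$ in the pointwise topology (Ascoli) then handles the constraint on $f_n$ at finitely many earlier points. I expect this step to be the main obstacle: producing the approximate biorthogonal system from non-precompactness alone, with the differences of selected points making the diagonal large while the finitely many Bohr constraints keep the off-diagonal small.

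Next, fix an almost disjoint family $\{A_\alpha:\alpha<\cont\}$ of infinite subsets of $\N$. For $S\subseteq\cont$, let $\tau_S$ be the $\mathcal S$-topology of \cite[Section 3]{AD20}. It is the topology of uniform convergence on the family consisting of all finite subsets of $\widehat G$ together with the sets $F_\alpha=\{f_n:n\in A_\alpha\}$ for $\alpha\in S$. Each $F_\alpha\subseteq U\pol$ is equicontinuous, so $\sigma(G,\widehat G)\le\tau_S\le\tau$. The topology $\tau_S$ is locally quasi-convex and Hausdorff, since it is the supremum of the Bohr topology and topologies generated by polars. It is compatible: the dual of $\tau_S$ lies between $\widehat G$ (as $\tau_S\ge$ Bohr) and the dual of $\tau$, which is $\widehat G$. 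The map $S\mapsto\tau_S$ is clearly monotone.

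It remains to show that $\beta\notin S$ implies $F_\beta$ is not $\tau_S$-equicontinuous near $0$, which gives $\tau_{S\cup\{\beta\}}\not\le\tau_S$ and hence an order embedding. Take a basic $\tau_S$-neighborhood $W$ determined by finitely many characters $g_1,\dots,g_k$, finitely many indices $\alpha_1,\dots,\alpha_r\in S$, and a tolerance $\delta$. The set $A_\beta\setminus(A_{\alpha_1}\cup\dots\cup A_{\alpha_r})$ is infinite by almost disjointness. Among the corresponding $x_n$, pigeonhole on the compact torus $\T^k$ gives two indices $n\neq n'$ with $g_i(x_n)\approx g_i(x_{n'})$. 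Put $y=x_n-x_{n'}$. The values $g_i(y)$ are then small. For $m\in A_{\alpha_j}$ we have $m\ne n,n'$, so $\norm{f_m(y)}<2\varepsilon$ by the off-diagonal estimate, and hence $y\in W$ once $\varepsilon<\delta/2$. But $\norm{f_n(y)}\ge 1/8-\varepsilon$, so $y\notin F_\beta\prepol$-type neighborhoods of small radius. Therefore $\tau_S$ does not contain $\tau_{S\cup\{\beta\}}$. This shows that $S\subseteq S'$ if and only if $\tau_S\le\tau_{S'}$, which completes the embedding.
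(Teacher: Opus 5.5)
Your second half (the almost disjoint coding, the topologies $\sigma\vee\bigvee_{\alpha\in S}\tau_{F_\alpha}$, compatibility by the sandwich argument, and pigeonhole on $\T^k$ applied to differences $x_n-x_{n'}$) is the paper's argument. The gap is the step you flag yourself, the construction of $(f_n,x_n)$, and as designed it can actually fail.

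At stage $n$ you need nonzero $f\in U\pol$ with $\norm{f(x_m)}<\varepsilon2^{-n}$ for $m<n$, that is, elements of a pointwise neighborhood of $0$ in $U\pol$. Compactness of $U\pol$ does not provide them, because $0$ can be an isolated point of an infinite polar. For instance, let $G=\Z^2$ be discrete, $D=\{0\}\cup\{-2^j:j\ge0\}$, and
$U=(4\Z\times\{0\})\cup\bigl((1+4\Z)\times D\bigr)\cup\bigl((3+4\Z)\times(-D)\bigr)$.
One checks the following.
\begin{itemize}
\item $U\pol=\{0\}\cup\{\pm(1/4,\rho):\rho\in\{0\}\cup\{2^{-m}:m\ge1\}\}$ and $(U\pol)\prepol=U$, so $U$ is a quasi-convex zero-neighborhood.
\item No finitely many translates of $U$ cover $G$, since its second projection $\{0\}\cup\{\pm2^j\}$ has density zero.
\item Every nonzero $\psi\in U\pol$ satisfies $\psi(1,0)=\pm1/4$, so $0$ is isolated in $U\pol$.
\item Any $x\in U$ with $\norm{\psi(x)}\ge1/8$ for some $\psi\in U\pol$ has odd first coordinate. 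At such $x$, $\norm{f(x)}\to1/4$ along every sequence of distinct $f\in U\pol$.
\end{itemize}
Hence no sequence with $f_n\in U\pol$, $x_n\in U$, large diagonal values and small off-diagonal values exists for this admissible $U$. Your stated reason for the large diagonal value (that $U$ is not contained in a finite Bohr constraint set up to small error) does not produce such pairs either.

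The missing idea is the paper's translation trick. Choose a non-isolated point $\chi$ of the infinite compact polar $E$ and take $f_n\in E-\chi$. This set is still $\tau$-equicontinuous, so $\tau_{F_\alpha}\le\tau$ survives. Then
$M_n=\{\psi-\chi:\psi\in E,\ \norm{(\psi-\chi)(x_i)}<\varepsilon_i\ (i<n)\}$
is infinite. An infinite family of characters is never equicontinuous on the precompact group $(G,\sigma)$ (Lemma~\ref{lem:finite-equicont}). So there are $f_n\in M_n$ and $x_n$ in the Bohr neighborhood $\{x:\norm{f_i(x)}<\varepsilon_n,\ i<n\}$ with $\norm{f_n(x_n)}>1/4$. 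The two are chosen together, and no requirement $x_n\in U$ is imposed. In the example above, $\chi=(1/4,0)$ is non-isolated and $E-\chi$ contains the characters $(0,2^{-m})\to0$.

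A smaller repair is needed in your separation step: with a fixed $\varepsilon$ you only handle tolerances $\delta>2\varepsilon$. You need off-diagonal bounds tending to $0$. Your recursion in fact gives this, since $\sup_{m\ne n}\norm{f_m(x_n)}<\varepsilon2^{-n}$. You must then pick $n,n'$ in a tail of $A_\beta\setminus(A_{\alpha_1}\cup\dots\cup A_{\alpha_r})$ on which this bound is below $\delta/2$, as the paper does.
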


In particular, $\width\C(G)\ge2^{\cont}$, and $\C(G)$ contains a
chain of cardinality $\cont^+$, using
Lemma~\ref{lem:powerset-chain}. The theorem also gives a dichotomy
for arbitrary Hausdorff locally quasi-convex abelian groups:
either $\C(G)$ is a singleton or $\width\C(G)\ge2^{\cont}$.
In particular, $\C(G)$ is a chain if and only if it is a singleton.
The hypothesis of non-precompactness is essential, as the nuclear
example in Proposition~\ref{prop:nuclear-singleton} shows.

Combining Theorem~\ref{main:embedding} with the infinite-rank theorem
of \cite{ADMP15} and the discrete reduction of \cite{AD20} gives
the following formulas.

\begin{maintheorem}\label{main:cardinalities}
Every infinite discrete abelian group $D$ satisfies
\[
  |\C(D)|=\width\C(D)=2^{2^{|D|}}.
\]
Let $G=\R^n\times L$ be a locally compact abelian group, where
$L$ has a compact open subgroup $K$, and put
$D=\Z^n\times(L/K)$. Then
\[
  |\C(G)|=\width\C(G)=
  \begin{cases}
    1,&G\text{ is compact},\\
    2^{2^{|D|}},&G\text{ is non-compact}.
  \end{cases}
\]
For non-compact $\sigma$-compact $G$, both invariants equal
$2^{\cont}$.
\end{maintheorem}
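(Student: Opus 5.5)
The plan is to pin both invariants between a universal upper bound and a lower bound coming from Theorem~\ref{main:embedding} and \cite[Theorem 4.5]{ADMP15}, and then transport the discrete result to locally compact groups via \cite{AD20}.

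\textbf{Upper bound.} On a set of cardinality $\kappa=|D|$ there are at most $2^{2^\kappa}$ topologies, since a topology is a subset of $\Pow(D)$. Hence $\width\C(D)\le|\C(D)|\le 2^{2^{|D|}}$. It then suffices to show $\width\C(D)\ge2^{2^{|D|}}$.

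\textbf{Lower bound for discrete $D$.} An infinite discrete group is non-precompact, because precompact discrete groups are finite. The proof splits by rank.

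\emph{Countable $D$.} Theorem~\ref{main:embedding} embeds $\Pow(\cont)$ into $\C(D)$. Fix an independent family $\mathcal A\subseteq\Pow(\cont)$ of size $2^{\cont}$; it is an antichain under inclusion. Its image is therefore an antichain of size $2^\cont=2^{2^{\aleph_0}}$.

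\emph{Uncountable $D$.} Write $|D|=\kappa>\aleph_0$. If $D$ has infinite rank $\kappa$, then \cite[Theorem 4.5]{ADMP15} gives width $2^{2^\kappa}$ directly. I expect this case analysis to be the main obstacle: one must check that the hypotheses of the cited infinite-rank theorem match exactly what the structure theory provides.

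If instead $\rk(D)<\kappa$, I would use the torsion part. Then $|t(D)|=\kappa$, so some $p$-component $D_p$ has size $\kappa$, or $\kappa$ is a sum of smaller ones. For the relevant subgroup, the $p$-rank of $D[p]$ equals $|D_p|$ for uncountable $p$-groups; more precisely, $r_p(D)=|D_p|$ whenever $D_p$ is uncountable. This yields a bounded-type subgroup of rank $\kappa$. I would then invoke the version of \cite[Theorem 4.5]{ADMP15} that is stated in terms of the free rank plus the $p$-ranks, i.e.\ the general rank $\rk(D)=|D|$ for uncountable $D$. The key fact to verify is that uncountable abelian groups always have general rank equal to their cardinality. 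This holds because $D$ embeds in its divisible hull, and the divisible hull has cardinality $\max(\aleph_0,\text{sum of ranks})$.

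\textbf{Locally compact groups.} First apply the structure theorem: $G=\R^n\times L$ with $K\le L$ compact open. The reduction of \cite{AD20} gives $\C(G)\cong\C(D)$ with $D=\Z^n\times(L/K)$, as posets, so the cardinality and width agree.

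\emph{Compact $G$.} Here $n=0$ and $L/K$ is finite. The group $D$ is then finite, so $\C(D)$ is a singleton, since the only Hausdorff topology on a finite group is the discrete one.

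\emph{Non-compact $G$.} Either $n\ge1$ or $L/K$ is infinite, so $D$ is infinite and the discrete case applies.

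\emph{Non-compact $\sigma$-compact $G$.} Here $L/K$ is a $\sigma$-compact discrete group, hence countable. So $D$ is countably infinite and the value is $2^{2^{\aleph_0}}=2^\cont$.
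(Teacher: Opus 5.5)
Your proposal is correct and follows the paper's route. You count topologies for the upper bound, use Theorem~\ref{main:embedding} for countable $D$, apply the infinite-rank theorem of \cite{ADMP15} for uncountable $D$, and transfer to locally compact groups via Theorem C of \cite{AD20}. The one difference is how you get infinite rank: your divisible-hull argument gives the sharper $\rk(D)=|D|$ for uncountable $D$, whereas the paper shows that finite rank forces countability. With that fact your separate case $\rk(D)<\kappa$ is vacuous, so you can drop that muddled paragraph.
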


Section~\ref{sec:embedding} proves Theorem~\ref{main:embedding}.
Section~\ref{sec:cardinalities} proves
Theorem~\ref{main:cardinalities}, and
Section~\ref{sec:questions} records its consequences for the
questions in \cite{AD20}. Section~\ref{sec:extensions} treats complete
metrizable groups and gives two counterexamples to broader
extensions. The general locally quasi-convex case of Question 6.4
and the full range of extensions requested in Question 6.8 are not
resolved here. All arguments are in ZFC.

\section{Preliminaries}\label{sec:preliminaries}

All groups are abelian and written additively. Group topologies are
Hausdorff unless an auxiliary topology is explicitly allowed to be
non-Hausdorff. We put
\[
  \T=\R/\Z,\qquad
  \norm{t}=\min_{m\in\Z}|s-m|\quad(t=s+\Z),\qquad
  \T_+=\{t\in\T:\norm{t}\le1/4\},
\]
and use $\N=\{1,2,\ldots\}$.

For a topological group $(G,\tau)$, a character is a continuous
homomorphism from $G$ to $\T$. The equality of dual groups in the
definition of compatibility is an equality of groups of functions
on the same underlying group. No equality of the compact-open
topologies on the dual groups is required. For $U\subseteq G$ and
$E\subseteq\widehat G$, define
\[
\begin{aligned}
  U\pol&=\{\chi\in\widehat G:\chi(U)\subseteq\T_+\},\\
  E\prepol&=\{x\in G:\chi(x)\in\T_+\text{ for every }\chi\in E\}.
\end{aligned}
\]
A set $U$ is quasi-convex if $U=(U\pol)\prepol$. The group is locally
quasi-convex if it has a neighborhood base at zero consisting of
quasi-convex sets. Such a Hausdorff group has enough characters to
separate its points. For a discrete abelian group $D$, $\C(D)$
always refers to compatibility with the discrete topology.

A group is precompact if finitely many translates of each
zero-neighborhood cover the group. The Bohr topology
$\sigma=\sigma(G,\widehat G)$ is the initial topology determined by
the characters in $\widehat G$. It is Hausdorff, precompact, locally
quasi-convex, and compatible with $\tau$. It is the least member of
$\C(G,\tau)$ and its unique precompact member; see
\cite{ComfortRoss,ADMP15}. Consequently, for every group topology
$\nu$,
\begin{equation}\label{eq:sandwich}
  \sigma\le\nu\le\tau
  \quad\Longrightarrow\quad
  (G,\nu)^\wedge=\widehat G.
\end{equation}
Indeed, the two inclusions of topologies give opposite inclusions
of the corresponding character groups. If $\nu$ is also locally
quasi-convex, it belongs to $\C(G,\tau)$.

The join of a family of group topologies is the least group
topology finer than every member of the family. Its
zero-neighborhood base consists of finite intersections of
zero-neighborhoods from the given topologies. In particular, joins
of locally quasi-convex topologies are locally quasi-convex whenever
the characters used to separate their basic quasi-convex
neighborhoods are continuous in the join.

For an abelian group $H$ and an integer $j\ge1$, let
$j_H:H\to H$ be the map $h\mapsto jh$. Thus, for $B\subseteq H$,
$j_H^{-1}(B)=\{h\in H:jh\in B\}$ is the inverse image of $B$
under this map. We shall use the small-arc identity
\begin{equation}\label{eq:small-arcs}
  \bigcap_{j=1}^m j_{\T}^{-1}(\T_+)
  =\{t\in\T:\norm{t}\le1/(4m)\},\qquad m\ge1.
\end{equation}
The left-hand side consists of those $t$ for which $jt\in\T_+$
for every $1\le j\le m$. One inclusion follows from
$\norm{jt}\le j\norm{t}$. The other follows by induction: if
$\norm{t}\le1/(4(m-1))$ and $mt\in\T_+$, then the representative
of $mt$ obtained by multiplying the representative of $t$ has
absolute value at most $m/(4(m-1))\le1/2$, and hence at most $1/4$.
The case $m=1$ is immediate.

\subsection{Uniform convergence on countable sets of characters}
\label{subsec:uniform-topologies}

In the proof of Theorem~\ref{main:embedding} and in
\ref{app:sparse}, we use topologies of uniform convergence on
countable subsets of $\widehat G$. We collect the properties
needed in these constructions, following
\cite[Lemmas 3.1 and 3.2 and Definition 3.5]{AD20}.
The statements hold for arbitrary subsets of $\widehat G$.

For $S\subseteq\widehat G$, let $\tau_S$ be the topology of
uniform convergence on $S$, with zero-neighborhood base
\begin{equation}\label{eq:uniform-neighborhoods}
  V(S,k)=\{x\in G:\norm{\chi(x)}\le1/(4k)
    \text{ for every }\chi\in S\},\qquad k\ge1.
\end{equation}
For later estimates, put
\[
  q_S(x)=\sup_{\chi\in S}\norm{\chi(x)},\qquad q_\varnothing=0,
\]
so that $V(S,k)=\{x:q_S(x)\le1/(4k)\}$.
The ambient group will always be clear, so it is omitted from
the notation. These auxiliary topologies may be non-Hausdorff;
in particular, $\tau_\varnothing$ is indiscrete.

\begin{proposition}\label{prop:uniform-basic}
Let $(G,\tau)$ be a topological abelian group and let
$S,T\subseteq\widehat G$.
\begin{enumerate}
\item[(a)] The function $q_S$ is symmetric and subadditive.
The topology $\tau_S$ is a group topology with a quasi-convex
zero-neighborhood base. Every $\chi\in S$ is
$\tau_S$-continuous, and
\begin{equation}\label{eq:uniform-polar}
  V(S,k)=\left(\bigcup_{j=1}^k jS\right)\prepol,
  \qquad jS=\{j\chi:\chi\in S\}.
\end{equation}
\item[(b)] If $S$ is $\tau$-equicontinuous, then $\tau_S\le\tau$.
\item[(c)] We have
\[
  q_{S\cup T}=\max\{q_S,q_T\},\qquad
  \tau_{S\cup T}=\tau_S\vee\tau_T.
\]
The join identity extends to finite unions.
\item[(d)] If $S$ is finite, $\tau_S$ is the initial topology
determined by the characters in $S$.
\end{enumerate}
\end{proposition}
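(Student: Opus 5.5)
The plan is to verify the four parts directly from the definitions (\ref{eq:uniform-neighborhoods}) of $V(S,k)$ and $q_S$, using the small-arc identity (\ref{eq:small-arcs}) as the one nontrivial input.

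For (a), symmetry and subadditivity of $q_S$ follow pointwise from $\norm{-t}=\norm{t}$ and $\norm{s+t}\le\norm{s}+\norm{t}$ in $\T$, together with the fact that each $\chi$ is a homomorphism; taking suprema over $\chi\in S$ preserves both properties. Consequently the sets $V(S,k)$ are symmetric, contain $0$, decrease in $k$, and satisfy $V(S,2k)+V(S,2k)\subseteq V(S,k)$. In an abelian group these conditions suffice for $\{V(S,k)\}$ to be a zero-neighborhood base of a (possibly non-Hausdorff) group topology. Continuity of each $\chi\in S$ at $0$ is immediate, because $\chi(V(S,k))$ lies in the arc $\norm{t}\le1/(4k)$; continuity at $0$ suffices for a homomorphism. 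For (\ref{eq:uniform-polar}), note that $x$ lies in the right-hand side iff $j\chi(x)\in\T_+$ for all $\chi\in S$ and $1\le j\le k$. By (\ref{eq:small-arcs}) applied to $t=\chi(x)$, this holds iff $\norm{\chi(x)}\le1/(4k)$ for all $\chi\in S$, which is exactly the condition defining $V(S,k)$. Since any set of the form $E\prepol$ is quasi-convex (the standard identity $((E\prepol)\pol)\prepol=E\prepol$, which follows from $E\subseteq (E\prepol)\pol$ and the order-reversing property), the base is quasi-convex. This uses that each $j\chi$ is a character, which holds by continuity under $\tau_S$; the prepolar is taken in $\widehat G$, so we only need $jS\subseteq\widehat G$.

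For (b), equicontinuity of $S$ means there is a $\tau$-neighborhood $U$ with $\chi(U)\subseteq\T_+$ for all $\chi\in S$; equivalently $U\subseteq S\prepol$. We need every $V(S,k)$ to be a $\tau$-neighborhood. By (\ref{eq:uniform-polar}), it suffices to find $W$ with $jW\subseteq U$ for $j\le k$. By continuity of addition, choose a $\tau$-neighborhood $W$ with $W+\dots+W$ ($k$ summands) contained in $U$. Since $0\in W$, this gives $jW\subseteq U$ for every $j\le k$. Hence $j\chi(W)\subseteq\T_+$ for all $\chi\in S$ and $j\le k$, so $W\subseteq V(S,k)$.

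For (c), the identity $q_{S\cup T}=\max\{q_S,q_T\}$ holds because a supremum over a union is the maximum of the two suprema. Hence $V(S\cup T,k)=V(S,k)\cap V(T,k)$. The join's zero-neighborhood base consists of sets $V(S,k)\cap V(T,l)$, and each such set contains $V(S\cup T,\max\{k,l\})$; conversely, each $V(S\cup T,k)$ is itself of this form. Induction then gives the result for finite unions. For (d), let $S=\{\chi_1,\dots,\chi_n\}$. Then $V(S,k)=\bigcap_i\chi_i^{-1}(\{\norm{t}\le1/(4k)\})$. These arcs form a base at $0$ in $\T$, so the sets $V(S,k)$ form exactly a base of the initial topology at $0$, and both topologies are group topologies.

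The only step that is not routine is (\ref{eq:uniform-polar}), and it reduces entirely to the already proved identity (\ref{eq:small-arcs}). The main care needed is checking that the $jS$ used there are genuinely in $\widehat G$, so that quasi-convexity is meant relative to $\widehat G$.
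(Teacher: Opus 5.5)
Your proof is correct and follows the same route as the paper. Symmetry and subadditivity of $q_S$ give the group-topology base; the small-arc identity \eqref{eq:small-arcs} gives \eqref{eq:uniform-polar} and hence quasi-convexity; and (c)--(d) come from $V(S\cup T,k)=V(S,k)\cap V(T,k)$ and the explicit base of the initial topology. The only difference is cosmetic and lies in (b): you obtain each $V(S,k)$ from the single inclusion $S\subseteq U\pol$ plus continuity of addition, whereas the paper reads it off directly from the definition of equicontinuity.
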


\begin{proof}
(a) Symmetry and subadditivity of $q_S$ follow from the norm on
$\T$. They give
\[
  -V(S,k)=V(S,k),\qquad
  V(S,2k)+V(S,2k)\subseteq V(S,k).
\]
Thus the decreasing family $\{V(S,k):k\ge1\}$ defines a group
topology. Each $\chi\in S$ is continuous because
$\norm{\chi(x)}\le q_S(x)$. By \eqref{eq:small-arcs},
$V(S,k)$ is the inverse polar in \eqref{eq:uniform-polar},
hence quasi-convex.

(b) Equicontinuity makes every $V(S,k)$ a
$\tau$-zero-neighborhood, so $\tau_S\le\tau$.

(c) The maximum identity follows from the definition of $q_S$.
It gives $V(S\cup T,k)=V(S,k)\cap V(T,k)$, proving the join
identity.

(d) For finite $S$, the sets
$V(S,k)=\bigcap_{\chi\in S}\{x:\norm{\chi(x)}\le1/(4k)\}$
form a neighborhood base for the initial topology.
\end{proof}

For a family $\mathcal S\subseteq\Pow(\widehat G)$, write
\[
  \tau_{\mathcal S}=\bigvee_{S\in\mathcal S}\tau_S,
\]
as in \cite[Definition 3.5]{AD20}. An empty join is indiscrete.
Thus $S$ indexes one character set, whereas $\mathcal S$
indexes a family of such sets.

\begin{proposition}\label{prop:uniform-joins}
Let $(G,\tau)$ be a Hausdorff locally quasi-convex abelian group,
put $\sigma=\sigma(G,\widehat G)$, and let
$\mathcal S\subseteq\Pow(\widehat G)$.
\begin{enumerate}
\item[(a)] The topology $\tau_{\mathcal S}$ has a quasi-convex
zero-neighborhood base
\[
  V(S_1\cup\cdots\cup S_r,k),\qquad
  S_1,\ldots,S_r\in\mathcal S,\quad k\ge1,
\]
where $r$ is finite and $r=0$ gives $G$.
\item[(b)] The Bohr topology is
\[
  \sigma=\bigvee_{\substack{F\subseteq\widehat G\\F\text{ finite}}}
    \tau_F.
\]
Thus adjoining all finite subsets of $\widehat G$ to
$\mathcal S$ gives $\sigma\vee\tau_{\mathcal S}$.
\item[(c)] If every $S\in\mathcal S$ is $\tau$-equicontinuous,
then
\[
  \nu=\sigma\vee\tau_{\mathcal S}\in\C(G,\tau).
\]
\end{enumerate}
\end{proposition}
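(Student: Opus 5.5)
Parts (a) and (b) come from the general description of joins recalled in Section~\ref{sec:preliminaries}, combined with Proposition~\ref{prop:uniform-basic}. Part (c) is then a direct check using \eqref{eq:sandwich}.

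For (a), recall that a zero-neighborhood base of a join of group topologies consists of finite intersections of basic zero-neighborhoods of the members. A basic neighborhood of $\tau_{\mathcal S}$ therefore has the form $V(S_1,k_1)\cap\cdots\cap V(S_r,k_r)$ with $S_i\in\mathcal S$. Put $k=\max_i k_i$. The sets $V(S,k)$ decrease in $k$, so this intersection contains $V(S_1,k)\cap\cdots\cap V(S_r,k)$. By Proposition~\ref{prop:uniform-basic}(c), extended to finite unions, the latter equals $V(S_1\cup\cdots\cup S_r,k)$. Conversely, each such set is itself a finite intersection of basic sets of the members. Hence these sets form a base. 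Each of them is quasi-convex by \eqref{eq:uniform-polar}. For $r=0$ the union is empty and $V(\varnothing,k)=G$, which matches the convention that an empty join is indiscrete.

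For (b), use Proposition~\ref{prop:uniform-basic}(d): for finite $F$, $\tau_F$ is the initial topology of the characters in $F$. The Bohr topology is the initial topology of all of $\widehat G$, and this equals the join of the initial topologies of its finite subfamilies. Indeed, both have the same subbasic sets $\chi^{-1}(W)$, and finite intersections of these are basic in both. The second sentence of (b) follows from associativity of joins: $\tau_{\mathcal S\cup\{F:F\text{ finite}\}}=\tau_{\mathcal S}\vee\sigma$.

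For (c), first note that every finite $F\subseteq\widehat G$ is $\tau$-equicontinuous, because finitely many continuous characters are equicontinuous. By Proposition~\ref{prop:uniform-basic}(b), each $\tau_S$ with $S\in\mathcal S$ and each $\tau_F$ is coarser than $\tau$. Hence $\sigma\le\nu\le\tau$, and \eqref{eq:sandwich} gives $(G,\nu)^\wedge=\widehat G$. The topology $\nu$ is Hausdorff because it is finer than the Hausdorff topology $\sigma$. It remains to show that $\nu$ is locally quasi-convex, and this is the one delicate point. By (a), applied to the enlarged family from (b), $\nu$ has the base $V(S_1\cup\cdots\cup S_r\cup F,k)$. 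Each such set is the inverse polar $E\prepol$ with $E=\bigcup_{j\le k}j(S_1\cup\cdots\cup S_r\cup F)$. Quasi-convexity must be read in the dual $(G,\nu)^\wedge$. Since this dual equals $\widehat G$, the polar and inverse polar operations coincide with the ones defined in the preliminaries. An inverse polar is always quasi-convex, since $E\prepol=((E\prepol)\pol)\prepol$ holds in general: $E\subseteq(E\prepol)\pol$ gives one inclusion and the other is automatic. So $\nu$ has a quasi-convex base and belongs to $\C(G,\tau)$.
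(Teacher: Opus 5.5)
Your proposal is correct and follows essentially the same route as the paper: the reduction to a common index $k=\max_i k_i$ giving $V(S_1\cup\cdots\cup S_r,k)$ in (a), Proposition~\ref{prop:uniform-basic}(d) for (b), and in (c) the bound $\sigma\le\nu\le\tau$ combined with \eqref{eq:sandwich}, with Hausdorffness inherited from $\sigma$. Your added checks, namely that the basic sets are genuinely $\tau_{\mathcal S}$-neighborhoods and that quasi-convexity is read relative to $(G,\nu)^\wedge=\widehat G$, only make explicit what the paper leaves implicit.
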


\begin{proof}
(a) A basic neighborhood in the join is a finite intersection
of sets $V(S_i,k_i)$. For $r\ge1$, taking $k=\max_i k_i$ gives
the smaller quasi-convex neighborhood
\[
  \bigcap_{i=1}^r V(S_i,k)=V(S_1\cup\cdots\cup S_r,k).
\]
The empty intersection gives $G$.

(b) This follows from Proposition~\ref{prop:uniform-basic}(d)
and the definition of $\sigma$.

(c) Proposition~\ref{prop:uniform-basic}(b) gives
$\sigma\le\nu\le\tau$. By (a)--(b), $\nu$ is locally
quasi-convex. It is Hausdorff because $\sigma$ is Hausdorff
and $\sigma\le\nu$; \eqref{eq:sandwich} gives compatibility.
\end{proof}

\begin{remark}
The finite-union identity in
Proposition~\ref{prop:uniform-basic}(c) need not hold for infinite unions.
For example, let $G=\T^\N$ have its product topology and put
$S=\{\pi_n:n\in\N\}$, where $\pi_n$ is the $n$th coordinate
character. Then $\bigvee_n\tau_{\{\pi_n\}}$ is the product
topology, whereas $\tau_S$ is strictly finer: $V(S,1)$ contains
no product zero-neighborhood. Also, $\tau_S$ alone need not be
compatible with $\tau$; Proposition~\ref{prop:uniform-joins}(c)
is the compatibility statement used below.
\end{remark}

\subsection{Equicontinuous families and compact polars}

\begin{lemma}\label{lem:finite-equicont}
Every equicontinuous family of characters of a precompact abelian
group is finite. Thus, if $P$ is precompact and
$M\subseteq\widehat P$ is infinite, then for every
zero-neighborhood $V$ in $P$ there exist $f\in M$ and $x\in V$
such that $\norm{f(x)}>1/4$.
\end{lemma}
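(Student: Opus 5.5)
We need to prove that an equicontinuous family of characters on a precompact abelian group is finite; the second sentence is then just a restatement. The plan is to reduce to the compact case via the completion and then use the Peter–Weyl/Pontryagin fact that equicontinuous sets of characters on a compact group are finite.

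\emph{Step 1 (reduction to the compact case).} Let $P$ be precompact and let $\widetilde P$ be its Weil completion, which is a compact abelian group. Every character of $P$ is uniformly continuous and extends uniquely to a character of $\widetilde P$. Suppose $M\subseteq\widehat P$ is equicontinuous, and choose a zero-neighborhood $V$ in $P$ with $M\subseteq V\pol$. Let $W$ be the closure of $V$ in $\widetilde P$. Since $\T_+$ is closed, every extension $\widetilde f$ with $f\in M$ satisfies $\widetilde f(W)\subseteq\T_+$. Moreover $W$ is a zero-neighborhood in $\widetilde P$, because closures of zero-neighborhoods of a dense subgroup are zero-neighborhoods in the completion. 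So the extended family $\widetilde M$ is equicontinuous on $\widetilde P$. The extension map is injective, so it suffices to show that $\widetilde M$ is finite.

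\emph{Step 2 (compact case).} Let $K$ be compact and $\widetilde M\subseteq W\pol$ with $W$ a zero-neighborhood. The set $W\pol$ is equicontinuous, so by Ascoli it is compact in the compact-open topology of $\widehat K$. That dual group is discrete since $K$ is compact. A compact subset of a discrete space is finite, so $W\pol$, and hence $\widetilde M$, is finite.

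For a more elementary alternative in Step 2, use Haar measure $\mu$. First note that $W\pol$ is equicontinuous, so $\{x:\operatorname{Re}e^{2\pi i\chi(x)}\ge 1/2 \ \forall\chi\in W\pol\}$ is a zero-neighborhood $U$ of positive measure. For distinct $\chi,\psi\in W\pol$, $\chi-\psi$ is nontrivial, so $\int e^{2\pi i(\chi-\psi)}\,d\mu=0$. Characters are orthonormal in $L^2(\mu)$, so Bessel's inequality applied to $\mathbf 1_U$ gives $\sum_\chi|\int_U e^{2\pi i\chi}d\mu|^2\le\mu(U)$. Each term is at least $(\mu(U)/2)^2$, so $|W\pol|\le 4/\mu(U)$. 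This gives an explicit finite bound.

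\emph{Step 3 (the restatement).} If $M$ is infinite and there were a zero-neighborhood $V$ with $\norm{f(x)}\le1/4$ for all $f\in M$ and $x\in V$, then $M\subseteq V\pol$ would be equicontinuous, contradicting the first part.

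The main obstacle is Step 1: we must check that the closure of a zero-neighborhood of $P$ is a zero-neighborhood of the completion, and that the extended characters still take values in $\T_+$ on it. Both are standard facts about completions, together with the closedness of $\T_+$.
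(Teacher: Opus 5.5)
Your argument is correct, but it takes a different route from the paper. The paper never leaves $P$. From a finite cover $P=\bigcup_{i\le r}(x_i+V)$ and compactness of $\T^r$, it shows directly that an equicontinuous $M$ is totally bounded for the uniform metric $d_\infty(f,g)=\sup_{x\in P}\norm{f(x)-g(x)}$. It then observes that distinct characters satisfy $d_\infty(f,g)\ge 1/3$, because every nonzero subgroup of $\T$ contains an element of norm at least $1/3$, so a finite $1/4$-net already exhausts $M$. In effect, the paper proves by hand, on the precompact group itself, the two ingredients you import: the Ascoli-type compactness of equicontinuous sets, and the discreteness of the dual of a compact group. The latter rests on the same fact that no nonzero subgroup of $\T$ lies in $\T_+$.

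Your route instead passes to the Weil completion and then cites either Ascoli together with discreteness of $\widehat K$, or Haar measure, orthogonality of characters and Bessel's inequality. This is conceptually transparent: an equicontinuous set becomes a compact subset of a discrete dual. Your Haar variant also gives the clean explicit bound $|W\pol|\le 4/\mu(U)$. The cost is the completion, the extension of uniformly continuous characters, and the existence of Haar measure, none of which the paper needs.

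You also use repeatedly, without proof, that the polar of a zero-neighborhood is equicontinuous. This is the step from $M\subseteq V\pol$ to equicontinuity in Steps~1 and~3, and for $W\pol$ in Step~2. It is standard, and it is exactly what the paper verifies in its second paragraph, via the neighborhoods $V_m=\bigcap_{j=1}^m j_P^{-1}(V)$ and the small-arc identity \eqref{eq:small-arcs}. Citing or including that short argument would make your Step~3 complete.
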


\begin{proof}
Let $M$ be equicontinuous. Given $\varepsilon>0$, choose a
zero-neighborhood $V$ such that
$\norm{f(v)}<\varepsilon/4$ for all $f\in M$ and $v\in V$.
Precompactness gives a finite set of centers
$x_1,\ldots,x_r$ with $P=\bigcup_{i=1}^r(x_i+V)$.
Equip $M$ with the uniform metric
\[
  d_\infty(f,g)=\sup_{x\in P}\norm{f(x)-g(x)}.
\]
The evaluation vectors $(f(x_i))_{i=1}^r$, $f\in M$, lie in the
compact metric space $\T^r$, whose metric is
$d_r(z,w)=\max_{1\le i\le r}\norm{z_i-w_i}$.
Consequently, there are finitely many characters
$g_1,\ldots,g_s\in M$ such that for every $f\in M$ there is a $k$
with
\[
  \max_{1\le i\le r}\norm{f(x_i)-g_k(x_i)}<\varepsilon/4.
\]
Fix such $f$ and $k$. Every $x\in P$ has a representation
$x=x_i+v$ with $v\in V$, and hence
\[
  \norm{f(x)-g_k(x)}
  \le\norm{f(x_i)-g_k(x_i)}+\norm{f(v)}+\norm{g_k(v)}
  <3\varepsilon/4.
\]
Thus $d_\infty(f,g_k)\le3\varepsilon/4<\varepsilon$.
The finite family $\{g_1,\ldots,g_s\}$ is therefore an
$\varepsilon$-net for $M$ in the uniform metric. Since
$\varepsilon>0$ was arbitrary, $M$ is totally bounded.

If $f\ne g$, the image of $f-g$ is a nonzero subgroup of $\T$.
Every such subgroup contains an element of norm at least $1/3$.
Indeed, choose a nonzero element, change its sign if necessary,
and represent it by $a\in(0,1/2]$. If $a<1/3$, the least positive
integer $n$ with $na\ge1/3$ satisfies $1/3\le na<2/3$, so
$\norm{na+\Z}\ge1/3$; if $a\ge1/3$, the chosen element already
has the required norm. Consequently,
\[
  f\ne g\quad\Longrightarrow\quad d_\infty(f,g)\ge1/3.
\]
Now take a finite $1/4$-net $\{g_1,\ldots,g_s\}\subseteq M$ as
above. For each $f\in M$, the inequality
$d_\infty(f,g_k)<1/4$ for some $k$ forces $f=g_k$.
Thus $M=\{g_1,\ldots,g_s\}$ is finite.

For the second assertion, suppose instead that
$f(V)\subseteq\T_+$ for every $f\in M$. The zero-neighborhoods
$V_m=\bigcap_{j=1}^m j_P^{-1}(V)$ then satisfy
$\norm{f(x)}\le1/(4m)$ for all $f\in M$ and $x\in V_m$, by
\eqref{eq:small-arcs}. This would make $M$ equicontinuous and
hence finite, a contradiction.
\end{proof}

\begin{lemma}\label{lem:infinite-polar}
Every non-precompact locally quasi-convex group $(G,\tau)$ has a
quasi-convex zero-neighborhood $U$ whose polar $E=U\pol$ is
infinite, equicontinuous, and compact for pointwise convergence
on $G$.
\end{lemma}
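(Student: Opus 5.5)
We argue by contradiction with Lemma~\ref{lem:finite-equicont}, applied not to $G$ itself but to the Bohr-type comparison. Since $(G,\tau)$ is locally quasi-convex, it has a base $\mathcal B$ of quasi-convex zero-neighborhoods. For every $U\in\mathcal B$, the polar $U\pol$ is $\tau$-equicontinuous. Indeed, given $m$, the set $\bigcap_{j=1}^m j_G^{-1}(U)$ is a $\tau$-zero-neighborhood. On it every $\chi\in U\pol$ satisfies $\norm{\chi(x)}\le1/(4m)$, by \eqref{eq:small-arcs}, exactly as in the last paragraph of the proof of Lemma~\ref{lem:finite-equicont}. Compactness of $U\pol$ for pointwise convergence is the standard Ascoli/Alaoglu-type fact. $U\pol$ is a subset of $\T^G$ cut out by closed conditions $\chi(x+y)=\chi(x)+\chi(y)$ and $\chi(U)\subseteq\T_+$, so it is closed in the compact space $\T^G$. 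Its members are homomorphisms into $\T$ that map $U$ into $\T_+$, hence they are continuous by the same small-arc argument. Therefore $U\pol\subseteq\widehat G$, and $U\pol$ is compact. So the only real content of the lemma is that \emph{some} $U\in\mathcal B$ has infinite polar.

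Suppose instead that $U\pol$ is finite for every $U\in\mathcal B$. Since $U$ is quasi-convex, $U=(U\pol)\prepol$, and $U\pol$ is a finite set $F_U\subseteq\widehat G$. Now $(F_U)\prepol$ contains $V(F_U,1)$, and it is a zero-neighborhood in $\tau_{F_U}$. By Proposition~\ref{prop:uniform-basic}(d), $\tau_{F_U}$ is the initial topology of finitely many characters, so it is coarser than $\sigma$ by Proposition~\ref{prop:uniform-joins}(b). Hence every $U\in\mathcal B$ is a $\sigma$-zero-neighborhood, which gives $\tau\le\sigma$. Combined with $\sigma\le\tau$, we get $\tau=\sigma$. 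But $\sigma$ is precompact, which contradicts the assumption that $\tau$ is non-precompact. So some $U\in\mathcal B$ has $U\pol$ infinite, and the first paragraph supplies equicontinuity and compactness.

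The main point requiring care is the continuity and closedness step in the first paragraph, that is, that the pointwise closure stays inside $\widehat G$. We handle it directly by the small-arc identity rather than by citing Alaoglu. The contradiction step is routine once we note that a quasi-convex set with finite polar is a Bohr neighborhood.
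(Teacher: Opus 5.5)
Your proof is correct and follows essentially the paper's argument. You get compactness and equicontinuity by identifying $U\pol$ with a closed subset of $\T^G$ and applying the small-arc identity, and you get infiniteness from the fact that a quasi-convex set with finite polar is a Bohr zero-neighborhood. The only cosmetic differences are that you argue globally (finite polars for all basic $U$ would force $\tau=\sigma$), whereas the paper fixes one $U$ inside a neighborhood with no finite cover by translates, and that your opening reference to Lemma~\ref{lem:finite-equicont} is never actually used.
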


\begin{proof}
Choose a zero-neighborhood having no finite covering by translates,
and choose a quasi-convex zero-neighborhood $U$ contained in it.
No finite collection of translates of $U$ covers $G$.

Work in the compact product $\T^G$, whose induced topology on a
family of functions is the topology of pointwise convergence.
For $x\in G$, the evaluation map $e_x:\T^G\to\T$, given by
$e_x(p)=p(x)$, is continuous. The set of abstract homomorphisms is
\[
  \Hom(G,\T)=\bigcap_{x,y\in G}
  \{p\in\T^G:p(x+y)-p(x)-p(y)=0\}.
\]
Each set in this intersection is closed, since
$p\mapsto p(x+y)-p(x)-p(y)$ is continuous and $\{0\}$ is closed
in $\T$. Thus $\Hom(G,\T)$ is closed in $\T^G$.
Now put
\[
  K_U=\Hom(G,\T)\cap\bigcap_{u\in U}e_u^{-1}(\T_+).
\]
Since $\T_+$ is closed in $\T$, each $e_u^{-1}(\T_+)$ is closed.
Consequently, $K_U$ is closed in $\T^G$ and hence compact.

It remains to identify $K_U$ with $E=U\pol$. For every $m\ge1$,
the set $U_m=\bigcap_{j=1}^mj_G^{-1}(U)$ is a
$\tau$-zero-neighborhood. If $h\in K_U$ and $x\in U_m$, then
$jh(x)=h(jx)\in\T_+$ for $1\le j\le m$, so
\eqref{eq:small-arcs} gives $\norm{h(x)}\le1/(4m)$.
These estimates prove that every $h\in K_U$ is $\tau$-continuous,
and they hold uniformly for all $h\in K_U$. Thus
$K_U\subseteq\widehat G$, $K_U=E$, and $E$ is equicontinuous.
The equality $E=K_U$ also proves that $E$ is closed in $\T^G$
and compact for pointwise convergence.

If $E$ were finite, quasi-convexity would imply $U=E\prepol$.
This set contains a Bohr zero-neighborhood determined by the
finitely many characters in $E$. Precompactness of the Bohr
topology would then give a finite covering of $G$ by translates
of $U$, a contradiction.
\end{proof}

\subsection{Chains in power sets}
\label{subsec:powerset-chains}

The following standard set-theoretic fact is used in
\cite[proof of Theorem A, p.~336]{AD20}. We include a proof to
make the chain consequence of Theorem~\ref{main:embedding}
self-contained.

\begin{lemma}\label{lem:powerset-chain}
For every infinite cardinal $\kappa$, the poset
$(\Pow(\kappa),\subseteq)$ contains a chain of cardinality
$\kappa^+$. In particular, $(\Pow(\cont),\subseteq)$ contains
a chain of cardinality $\cont^+$.
\end{lemma}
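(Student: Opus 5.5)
Since $|\kappa\times\kappa|=\kappa$, we may replace $\kappa$ by any set of the same size. The plan is to find a linearly ordered set $(L,<)$ of cardinality $\kappa^+$ together with an order embedding of $L$ into $(\Pow(X),\subseteq)$ for some set $X$ with $|X|=\kappa$. The natural choice is Dedekind-cut style: take a dense linear order $Q$ of cardinality $\kappa$ whose set of cuts has size at least $\kappa^+$, and send each element $\ell$ of the completion to the down-set $\{q\in Q:q<\ell\}\subseteq Q$. This map is strictly increasing, hence injective on a linear order, so its image is a chain in $\Pow(Q)$ of the same cardinality as the completion.

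The key step is producing $Q$ of size $\kappa$ with at least $\kappa^+$ cuts without assuming $2^{<\kappa}=\kappa$. To do this we use the lexicographic order on sequences. Let $\lambda\le\kappa$ be the least cardinal with $2^\lambda>\kappa$; such a $\lambda$ exists since $2^\kappa>\kappa$. Put $Q=\{s\in 2^{<\lambda}\text{ eventually zero, or simply all }s\in 2^{<\lambda}\}$ and order $2^{\le\lambda}$ lexicographically, where a proper initial segment precedes its extensions by $1$ and follows those by $0$, or equivalently code $s$ as $s^\frown 0 1 0^{\dots}$. By minimality, $|2^{\mu}|\le\kappa$ for every $\mu<\lambda$, so $|2^{<\lambda}|\le\lambda\cdot\kappa=\kappa$. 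Padding $Q$ by $\kappa$ extra points on top if needed gives $|Q|=\kappa$. Meanwhile the distinct branches $b\in 2^\lambda$ determine distinct down-sets $D_b=\{s\in Q:s<_{\mathrm{lex}}b\}$. Indeed, if $b<_{\mathrm{lex}}b'$ first differ at $\alpha$, then the node $b\restriction(\alpha+1)$, suitably coded, lies between them. Moreover $b\mapsto D_b$ is monotone, and these down-sets form a chain of size $2^\lambda\ge\kappa^+$. Taking any subfamily of size exactly $\kappa^+$ finishes the general statement. The instance $\kappa=\cont$ then gives the second sentence; there one can even take $\lambda=\aleph_0$ only if $\cont<2^{\aleph_0}$, which fails, so the general argument is genuinely needed.

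The step I expect to be the main obstacle is the careful bookkeeping of the lexicographic coding. Nodes of $2^{<\lambda}$ must be placed so that between any two distinct branches some node sits strictly between them, and this must work at limit levels, so that $b\mapsto D_b$ is injective. My first attempt is to identify each node $s$ with the sequence $s^\frown 1^\frown 0^{(\lambda)}$ inside $2^\lambda$ and to use the ordinary lexicographic order on $2^\lambda$, which is a linear order. If $b<b'$ first differ at $\alpha$, so that $b(\alpha)=0$ and $b'(\alpha)=1$, the node $s=b\restriction\alpha$ is coded as $b\restriction\alpha^\frown1^\frown0\cdots$. This coded node is $\le b'$, but it may equal $b'$. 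If it does, use the node $(b\restriction\alpha)^\frown 0$ extended by $1$ at a later position where $b$ has a $0$; if $b$ has no such position, $b$ is eventually $1$ and we instead perturb on the $b'$ side. Alternatively one can sidestep the issue by discarding the at most $\kappa$ branches that are eventually constant, since $2^\lambda$ minus a set of size $\le\kappa$ still has size $>\kappa$.
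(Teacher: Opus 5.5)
Your approach is the paper's: take the least $\lambda$ with $2^\lambda>\kappa$, bound $|2^{<\lambda}|\le\kappa$, and separate the $2^\lambda$ branches by their lexicographic lower cuts in the tree. The paper avoids the bookkeeping you worry about by adjoining a terminal symbol $*$ with $0<*<1$. This is exactly your first convention, in which a node precedes its extensions by $1$ and follows those by $0$. With it, if $b<c$ first differ at $\alpha$, the common prefix satisfies $b<b\mathbin{\upharpoonright}\alpha<c$ in every case, so no case analysis or discarding is needed.

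Several of your side claims are wrong, though.

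The node $b\mathbin{\upharpoonright}(\alpha+1)$ that you first name does not lie strictly between $b$ and $b'$ when $b(\alpha+1)=1$, under any of your codings. The right node is $b\mathbin{\upharpoonright}\alpha$, as you use later.

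The coding $s^\frown010\cdots$ is not equivalent to that convention. The code $0100\cdots$ of the empty node lies below the branch $0111\cdots$, which extends the empty node by $0$.

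In the coding $s\mapsto s^\frown1^\frown0\cdots$, take $b=t^\frown0^\frown1^{(\lambda)}$ and $b'=t^\frown1^\frown0^{(\lambda)}$. These branches are adjacent in $(2^\lambda,<_{\mathrm{lex}})$, and $b$ is not a code. Hence $D_b=D_{b'}$, and no perturbation on the $b'$ side can help.

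Your alternative of discarding the at most $\kappa$ eventually constant branches does repair this coding. After discarding, $b'$ is not eventually $0$, so the code of $b\mathbin{\upharpoonright}\alpha$ lies strictly between $b$ and $b'$. The proposal therefore contains a complete proof once the erroneous remarks are removed.
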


\begin{proof}
Regard cardinals as initial ordinals. Let $\mu$ be the least
infinite cardinal such that $2^\mu>\kappa$. Cantor's theorem
gives $\mu\le\kappa$. Put
\[
  T=\bigcup_{\alpha<\mu}\{0,1\}^{\alpha},\qquad
  B=\{0,1\}^{\mu}.
\]
Thus $T$ consists of the binary sequences of length less than
$\mu$, and $B$ consists of the branches of length $\mu$.
For every $\alpha<\mu$, we have $2^{|\alpha|}\le\kappa$:
this follows from the minimality of $\mu$ when $\alpha$ is
infinite, and from the infinitude of $\kappa$ when $\alpha$
is finite. Consequently,
\[
  |T|\le\sum_{\alpha<\mu}2^{|\alpha|}
       \le\mu\cdot\kappa=\kappa.
\]

Order $T\cup B$ lexicographically. When a node in $T$ ends,
use a terminal symbol $*$ with $0<*<1$; compare distinct
sequences at the first position where their symbols differ.
For each $b\in B$, define the lower cut
\[
  L_b=\{s\in T:s<b\}.
\]
If $b,c\in B$ and $b<c$, let
\[
  \alpha=\min\{\beta<\mu:b(\beta)\ne c(\beta)\}.
\]
Then $b(\alpha)=0$, $c(\alpha)=1$, and their common prefix
$s=b\mathbin{\upharpoonright}\alpha
  =c\mathbin{\upharpoonright}\alpha\in T$
satisfies $b<s<c$. Hence $L_b\subseteq L_c$ and
$s\in L_c\setminus L_b$, so $L_b\subsetneq L_c$.
It follows that
\[
  \{L_b:b\in B\}\subseteq\Pow(T)
\]
is a chain of cardinality $|B|=2^\mu>\kappa$.
Choose an injection $i:T\to\kappa$. The images $i[L_b]$
form a chain in $\Pow(\kappa)$ of the same cardinality,
which is at least $\kappa^+$. Selecting $\kappa^+$ members
gives the desired chain. Taking $\kappa=\cont$ proves the
particular case.
\end{proof}

Here, a chain is a family linearly ordered by inclusion;
$\kappa^+$ specifies its cardinality, not its order type.

\section{The order embedding}\label{sec:embedding}

\begin{proof}[Proof of Theorem~\ref{main:embedding}]
Write $\sigma=\sigma(G,\widehat G)$ and take the infinite compact
equicontinuous set $E$ from Lemma~\ref{lem:infinite-polar}.
An infinite compact Hausdorff space has a non-isolated point;
choose $\chi\in E$ with this property. Put
$\varepsilon_n=2^{-n-6}$ for $n\ge1$.

We first construct $f_n\in E-\chi$ and $x_n\in G$ satisfying
\begin{equation}\label{eq:matrix}
  \norm{f_n(x_n)}>1/4,\qquad
  \norm{f_j(x_n)}<\varepsilon_n\quad(j\ne n).
\end{equation}
Suppose $f_i,x_i$ have been chosen for $i<n$. The set
\[
  M_n=\{\psi-\chi:\psi\in E,
    \ \norm{(\psi-\chi)(x_i)}<\varepsilon_i\text{ for }i<n\}
\]
is infinite: the conditions on $\psi$ define a relative open
neighborhood of the non-isolated point $\chi$ in the Hausdorff
space $E$, and every such neighborhood is infinite. Apply
Lemma~\ref{lem:finite-equicont} to the precompact group
$(G,\sigma)$, the infinite family $M_n$, and the neighborhood
\[
  V_n=\{x\in G:\norm{f_i(x)}<\varepsilon_n\text{ for }i<n\}.
\]
Every member of $M_n$ is a $\sigma$-continuous character. The lemma
gives $f_n\in M_n$ and $x_n\in V_n$ with
$\norm{f_n(x_n)}>1/4$. The definition of $M_n$ controls the new
character at the earlier points, while the definition of $V_n$
controls the earlier characters at the new point. After completing
the recursion, \eqref{eq:matrix} holds for all indices.

For $A\subseteq\N$, put $S_A=\{f_n:n\in A\}$. Let $\tau_A$
be the topology on $G$ of uniform convergence with respect to
the character family $S_A$.
By Proposition~\ref{prop:uniform-basic}, it has the quasi-convex
zero-neighborhood base
\[
  V(A,k)=\{x\in G:\norm{f_j(x)}\le1/(4k)
    \text{ for every }j\in A\},\qquad k\ge1.
\]
The topology $\tau_A$ need not be Hausdorff. Since $E-\chi$ is
$\tau$-equicontinuous, part (b) of that proposition gives
$\tau_A\le\tau$.

Fix an almost disjoint family
$\{A_\xi:\xi<\cont\}\subseteq[\N]^\omega$; thus each $A_\xi$
is infinite and $A_\xi\cap A_\zeta$ is finite for $\xi\ne\zeta$.
Such a family is obtained from the sets of initial segments of
the infinite branches of the countable binary tree. For
$J\subseteq\cont$, define
\begin{equation}\label{eq:nuJ}
  \nu_J=\sigma\vee\bigvee_{\xi\in J}\tau_{A_\xi}.
\end{equation}
In the notation for $\mathcal S$-topologies, this is
$\nu_J=\tau_{\mathcal S_J}$, where
\[
  \mathcal S_J=[\widehat G]^{<\omega}
      \cup\{S_{A_\xi}:\xi\in J\},
\]
and $[\widehat G]^{<\omega}$ is the family of all finite subsets
of $\widehat G$. Proposition~\ref{prop:uniform-joins}(b)--(c)
gives $\sigma\le\nu_J\le\tau$ and
$\nu_J\in\C(G,\tau)$.

For each $\xi<\cont$, let $W_\xi=V(A_\xi,2)$. We claim that
\begin{equation}\label{eq:membership-test}
  W_\xi\text{ is a }\nu_J\text{-zero-neighborhood}
  \quad\Longleftrightarrow\quad\xi\in J.
\end{equation}
If $\xi\in J$, this is immediate from \eqref{eq:nuJ}.
Suppose $\xi\notin J$. A $\nu_J$-zero-neighborhood base consists
of the sets
\[
  B(A,F,\eta)=\{x\in G:\norm{\phi(x)}\le\eta
    \text{ for every }\phi\in S_A\cup F\},
\]
where $F\subseteq\widehat G$ is finite, $\eta>0$, and
$A=\bigcup_{\zeta\in J_0}A_\zeta$ for a finite $J_0\subseteq J$.
Indeed, Proposition~\ref{prop:uniform-joins}(a) allows us to
combine the finitely many character sets involved and use a
smaller common bound. Empty $F$ and $J_0$ are allowed.

Almost disjointness implies that $A_\xi\setminus A$ is infinite.
Restrict to an infinite tail on which $\varepsilon_n<\eta/2$.
Compactness gives a finite cover of $\T^F$ by sets on each of
which every coordinate varies by less than $\eta$. Since the
tail is infinite, there are distinct indices $n,m$ whose
evaluation vectors $(\psi(x_n))_{\psi\in F}$ and
$(\psi(x_m))_{\psi\in F}$ belong to the same member of this cover.
Hence
\[
  \norm{\psi(x_n)-\psi(x_m)}<\eta\qquad(\psi\in F).
\]
For every $j\in A$, we have $j\ne n,m$, so \eqref{eq:matrix} gives
\[
  \norm{f_j(x_n-x_m)}
  \le\norm{f_j(x_n)}+\norm{f_j(x_m)}
  <\varepsilon_n+\varepsilon_m<\eta.
\]
Thus $x_n-x_m\in B(A,F,\eta)$. On the other hand,
\[
  \norm{f_n(x_n-x_m)}
  \ge\norm{f_n(x_n)}-\norm{f_n(x_m)}
  >1/4-\varepsilon_m>1/8.
\]
Since $n\in A_\xi$, this gives $x_n-x_m\notin W_\xi$.
Thus no basic $\nu_J$-zero-neighborhood is contained in $W_\xi$,
proving \eqref{eq:membership-test}.

The assignment $J\mapsto\nu_J$ is order-preserving. If
$\nu_J\le\nu_{J'}$, every $\nu_J$-neighborhood is also a
$\nu_{J'}$-neighborhood. Applying \eqref{eq:membership-test} to
$W_\xi$ for each $\xi\in J$ gives $J\subseteq J'$. Consequently,
\[
  \nu_J\le\nu_{J'}\quad\Longleftrightarrow\quad J\subseteq J'.
\]
If $J\subsetneq J'$, choose $\xi\in J'\setminus J$.
Then $W_\xi$ is a $\nu_{J'}$-zero-neighborhood but not a
$\nu_J$-zero-neighborhood, so $\nu_J<\nu_{J'}$.
Thus strict inclusion, equality, and incomparability of indexing
sets correspond exactly to the respective relations between the
associated topologies. The assignment is an order embedding.
\end{proof}

\begin{corollary}\label{cor:width}
Every non-precompact Hausdorff locally quasi-convex abelian group
$G$ satisfies
\[
  |\C(G)|\ge\width\C(G)\ge2^{\cont},
\]
and $\C(G)$ contains a chain of cardinality $\cont^+$.
\end{corollary}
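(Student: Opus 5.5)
The corollary follows from Theorem~\ref{main:embedding} by pulling antichains and chains of $(\Pow(\cont),\subseteq)$ back through the order embedding. Let $\Phi\colon(\Pow(\cont),\subseteq)\to\C(G)$ be the embedding, $J\mapsto\nu_J$, constructed above. Since $\Phi$ reflects order ($\nu_J\le\nu_{J'}$ if and only if $J\subseteq J'$), it is injective and maps antichains to antichains and chains to chains of the same cardinality.

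For the width bound, I will produce an antichain in $\Pow(\cont)$ of size $2^{\cont}$. The cleanest choice is to identify $\cont$ with $\cont\times\{0,1\}$ (possible because $|\cont\times 2|=\cont$). For each $X\subseteq\cont$, put
\[
  J_X=(X\times\{0\})\cup((\cont\setminus X)\times\{1\}).
\]
If $X\ne Y$, pick $\alpha$ in their symmetric difference, say $\alpha\in X\setminus Y$. Then $(\alpha,0)\in J_X\setminus J_Y$ and $(\alpha,1)\in J_Y\setminus J_X$, so $J_X$ and $J_Y$ are incomparable. This gives an antichain of cardinality $2^{\cont}$. Its image $\{\nu_{J_X}\}$ is an antichain in $\C(G)$, so $\width\C(G)\ge2^{\cont}$.

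The inequality $|\C(G)|\ge\width\C(G)$ is trivial, because every antichain is a subset of $\C(G)$.

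For the chain, Lemma~\ref{lem:powerset-chain} with $\kappa=\cont$ gives a chain of cardinality $\cont^+$ in $(\Pow(\cont),\subseteq)$, and its image under $\Phi$ is a chain in $\C(G)$ of the same cardinality.

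There is no real obstacle here. The only point requiring care is that $\Phi$ is an order embedding rather than merely monotone, and this is established at the end of the proof of Theorem~\ref{main:embedding}.
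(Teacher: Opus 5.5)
Your proposal is correct and follows essentially the same route as the paper: transfer through the order embedding of Theorem~\ref{main:embedding}, applied to the antichain of "one element from each two-element block" subsets of $\cont$ and to the $\cont^+$-chain from Lemma~\ref{lem:powerset-chain}. Your explicit verification that $J_X$ and $J_Y$ are incomparable, and your remark that order reflection gives injectivity, fill in details the paper leaves implicit.
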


\begin{proof}
Partition a set of cardinality $\cont$ into two-element blocks.
The subsets choosing exactly one element from each block form
an antichain in its power set of cardinality $2^{\cont}$.
Theorem~\ref{main:embedding} transfers this antichain to $\C(G)$.
For the chain assertion, Lemma~\ref{lem:powerset-chain} gives
a chain of cardinality $\cont^+$ in $\Pow(\cont)$, whose image
under the order embedding in Theorem~\ref{main:embedding}
is a chain of the same cardinality in $\C(G)$.
\end{proof}

\begin{corollary}\label{cor:universality}
If $G$ is a non-precompact Hausdorff locally quasi-convex abelian
group, every poset of cardinality at most $\cont$ order-embeds into
$[\sigma(G,\widehat G),\tau]\subseteq\C(G,\tau)$.
\end{corollary}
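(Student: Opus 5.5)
The plan is to reduce to Theorem~\ref{main:embedding} by the standard down-set representation of posets. Let $(P,\le)$ be a poset with $|P|\le\cont$. For $p\in P$, put
\[
  \downarrow p=\{q\in P:q\le p\}\subseteq P .
\]
The map $p\mapsto\ \downarrow p$ is an order embedding $(P,\le)\to(\Pow(P),\subseteq)$. If $p\le p'$, transitivity gives $\downarrow p\subseteq\ \downarrow p'$. Conversely, if $\downarrow p\subseteq\ \downarrow p'$, then reflexivity gives $p\in\ \downarrow p\subseteq\ \downarrow p'$, so $p\le p'$. In particular the map is injective, by antisymmetry.

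Next we choose an injection $i:P\to\cont$, which exists because $|P|\le\cont$. The induced direct-image map $\Pow(P)\to\Pow(\cont)$, $X\mapsto i[X]$, satisfies $X\subseteq Y$ if and only if $i[X]\subseteq i[Y]$, because $i$ is injective. Hence it is an order embedding.

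Composing these two maps with the order embedding $J\mapsto\nu_J$ from Theorem~\ref{main:embedding} gives
\[
  p\longmapsto \nu_{i[\downarrow p]} .
\]
Each step reflects as well as preserves the order, so the composite satisfies $p\le p'$ if and only if $\nu_{i[\downarrow p]}\le\nu_{i[\downarrow p']}$. It is therefore an order embedding of $P$ into $\C(G,\tau)$.

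By Theorem~\ref{main:embedding}, every $\nu_J$ lies in the interval $[\sigma(G,\widehat G),\tau]$; directly, $\sigma\le\nu_J\le\tau$ by Proposition~\ref{prop:uniform-joins}(c). So the image of $P$ lies in that interval, as required.

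No step here is a real obstacle once Theorem~\ref{main:embedding} is available. The only point that needs care is that ``order embedding'' must mean the equivalence $p\le p'\iff\phi(p)\le\phi(p')$, not merely monotonicity. Each of the three maps satisfies this equivalence, so the composite does too.
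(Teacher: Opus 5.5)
Your proposal is correct and follows the paper's route: the down-set embedding $p\mapsto\{q\in P:q\le p\}$ into $\Pow(P)$, transport to $\Pow(\cont)$ via an injection, and composition with the embedding of Theorem~\ref{main:embedding}, whose image already lies in $[\sigma(G,\widehat G),\tau]$. Your explicit check that each map reflects as well as preserves the order just spells out the paper's two-line argument in more detail.
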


\begin{proof}
For a poset $P$, the map $p\mapsto\{q\in P:q\le p\}$ is an order
embedding into $\Pow(P)$. If $|P|\le\cont$, compose it with an
embedding into $\Pow(\cont)$ and with Theorem~\ref{main:embedding}.
\end{proof}

\begin{corollary}\label{cor:dichotomy}
For every Hausdorff locally quasi-convex abelian group $G$, either
$|\C(G)|=1$ or $\width\C(G)\ge2^{\cont}$.
Consequently, $\C(G)$ is a chain if and only if it is a singleton.
\end{corollary}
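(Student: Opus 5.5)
The plan is to split according to whether $(G,\tau)$ is precompact, and to use the fact recalled in the Preliminaries that $\sigma=\sigma(G,\widehat G)$ is the least member of $\C(G,\tau)$ and its unique precompact member.

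\emph{Precompact case.} Suppose $(G,\tau)$ is precompact. Then $\tau$ is itself a precompact member of $\C(G,\tau)$, so uniqueness forces $\tau=\sigma$. To see that $\C(G)$ is a singleton, take any $\nu\in\C(G,\tau)$. I would show $\nu\le\sigma$; since $\sigma$ is least, this gives $\nu=\sigma$. Let $U$ be a quasi-convex $\nu$-zero-neighborhood. By the argument of Lemma~\ref{lem:infinite-polar}, its polar $U\pol$ is $\nu$-equicontinuous. It is also contained in $(G,\nu)^\wedge=\widehat G$, which is the dual of the precompact group $(G,\sigma)$.

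The main obstacle is that Lemma~\ref{lem:finite-equicont} needs equicontinuity with respect to a precompact topology, whereas here $U\pol$ is only $\nu$-equicontinuous. I would sidestep this with a direct precompactness test. If $\nu$ is not precompact, Lemma~\ref{lem:infinite-polar} applies to $(G,\nu)$ and gives a quasi-convex $\nu$-zero-neighborhood whose polar is infinite. So $\nu$ is non-precompact and, by the second case below, $\width\C(G,\nu)\ge2^{\cont}$. But $\C(G,\nu)=\C(G,\tau)$, because both are defined by the same dual group, and then $\C(G,\tau)$ contains a non-precompact member.

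The precompact case is best organized by showing that every member $\nu$ of $\C(G,\tau)$ is precompact, so that $\nu=\sigma$ by uniqueness. That is not automatic: a non-precompact $\nu$ can share its dual with a precompact $\tau$. So the dichotomy should be indexed by whether $\C(G)$ contains a non-precompact member, not by whether $\tau$ is precompact.

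\emph{Final structure.} If every member of $\C(G,\tau)$ is precompact, then each member equals $\sigma$ by uniqueness, and $|\C(G)|=1$. Otherwise pick a non-precompact $\nu\in\C(G,\tau)$. It is Hausdorff and locally quasi-convex, so Corollary~\ref{cor:width} applied to $(G,\nu)$ gives $\width\C(G,\nu)\ge2^{\cont}$. Since $(G,\nu)^\wedge=\widehat G$, we have $\C(G,\nu)=\C(G,\tau)$, and the width bound follows.

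For the chain assertion: a singleton is trivially a chain. If $\C(G)$ is not a singleton, the width is at least $2^{\cont}>1$, so it contains a two-element antichain and is not a chain.
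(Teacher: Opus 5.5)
Your final structure is correct and is exactly the paper's argument: any member of $\C(G)$ other than $\sigma$ is non-precompact, because $\sigma$ is the unique precompact member, and applying Theorem~\ref{main:embedding} (through Corollary~\ref{cor:width}) to that member $\nu$, together with $\C(G,\nu)=\C(G,\tau)$, yields the width bound. The exploratory ``precompact case'' paragraphs should simply be deleted, since, as you note yourself, precompactness of $\tau$ does not force $\C(G)$ to be a singleton (e.g.\ $\Z$ with its Bohr topology).
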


\begin{proof}
The Bohr topology is the unique precompact member of $\C(G)$.
If there is another member $\nu$, apply
Theorem~\ref{main:embedding} to $(G,\nu)$ and observe that
$\C(G,\nu)=\C(G)$.
\end{proof}

\begin{remark}
Corollary~\ref{cor:dichotomy} strengthens the conclusion proposed
in \cite[Conjecture 7.6]{ADMP15} and rules out the finite
non-singleton possibility asked about in
\cite[Question 7.5]{ADMP15}. It also answers the chain question
recorded in \cite[Question 1.1(b)]{AD20} for Hausdorff locally
quasi-convex abelian groups.
\end{remark}

\section{Exact cardinalities}\label{sec:cardinalities}

We use two results from the literature. First, the infinite-rank
theorem of \cite[Theorem 4.5]{ADMP15}, also stated as Theorem 1.2 of
\cite{AD20}, gives
\begin{equation}\label{eq:infinite-rank}
  |\C(D)|=\width\C(D)=2^{2^{|D|}}
\end{equation}
for every discrete abelian group $D$ of infinite rank. The rank
convention is
\[
  \rk(D)=r_0(D)+\sum_{p\text{ prime}}r_p(D),
\]
where $r_0(D)=\dim_{\Q}(\Q\otimes_{\Z}D)$ and
$r_p(D)=\dim_{\F_p}D[p]$, with $D[p]=\{x\in D:px=0\}$.
Second, Theorem C of \cite{AD20} states that if
$G=\R^n\times L$ is locally compact abelian and $K\le L$ is
compact and open, then
\begin{equation}\label{eq:discrete-reduction}
  \C(G)\cong\C\bigl(\Z^n\times(L/K)\bigr)
\end{equation}
as ordered sets. The standard structure theorem for locally
compact abelian groups ensures that every such $G$ admits this
form, with $n\ge0$.

\begin{theorem}\label{thm:discrete}
For every discrete abelian group $D$,
\[
  |\C(D)|=\width\C(D)=
  \begin{cases}
    1,&D\text{ is finite},\\
    2^{2^{|D|}},&D\text{ is infinite}.
  \end{cases}
\]
\end{theorem}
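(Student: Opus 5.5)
Finite $D$ first. A Hausdorff topology on a finite group is discrete, so $\C(D)$ is a single point and $|\C(D)|=\width\C(D)=1$.

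Now let $D$ be infinite. We always have $\width\C(D)\le|\C(D)|$. For the upper bound, every member of $\C(D)$ is a group topology on $D$, i.e.\ a subset of $\Pow(D)$. There are at most $2^{2^{|D|}}$ such subsets, so $|\C(D)|\le2^{2^{|D|}}$. It therefore suffices to show $\width\C(D)\ge2^{2^{|D|}}$, and we split according to the rank.

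\textbf{Infinite rank.} This is exactly \eqref{eq:infinite-rank}, taken from \cite[Theorem 4.5]{ADMP15}.

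\textbf{Finite rank.} We aim to show $|D|=\aleph_0$, after which Corollary~\ref{cor:width} finishes the argument. An infinite discrete group is not precompact, because finitely many translates of the zero-neighborhood $\{0\}$ cover only a finite set. So Corollary~\ref{cor:width} gives
\[
\width\C(D)\ge2^{\cont}=2^{2^{\aleph_0}}=2^{2^{|D|}}.
\]
The main point is therefore the cardinality bound $|D|\le\aleph_0$ when $\rk(D)$ is finite.

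For the torsion part $t(D)=\bigoplus_p D_p$, only finitely many primes $p$ have $r_p(D)\neq0$. For each such $p$, the $p$-group $D_p$ has finite socle $D[p]$. An induction along the layers $D[p^k]/D[p^{k-1}]$, each of which embeds into $D[p]$ via multiplication by $p^{k-1}$, shows that every $D[p^k]$ is finite. Hence $D_p=\bigcup_k D[p^k]$ is countable, and so $t(D)$ is countable.

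For the quotient, $D/t(D)$ is torsion-free of rank $r_0(D)<\infty$. It embeds into $\Q\otimes_{\Z}D\cong\Q^{r_0(D)}$, which is countable.

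Combining the two, $|D|\le|t(D)|\cdot|D/t(D)|\le\aleph_0$, so the infinite group $D$ is countable. This completes the finite-rank case and the proof.
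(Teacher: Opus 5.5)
Your proof is correct and follows the paper's approach. It uses the same ingredients: the counting bound on topologies, Corollary~\ref{cor:width} for countably infinite $D$, the infinite-rank theorem of \cite{ADMP15}, and the layer argument showing that finite rank forces $D$ to be countable. The only difference is organizational: you split by rank, while the paper splits by cardinality and proves the contrapositive that an uncountable $D$ has infinite rank.
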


\begin{proof}
A finite group has only one Hausdorff topology. If $D$ is
countably infinite, its discrete topology is non-precompact.
Corollary~\ref{cor:width} gives an antichain of size $2^{\cont}$,
while the number of topologies on a countable set is at most
$|\Pow(\Pow(D))|=2^{\cont}$. This proves both equalities.

If $D$ is uncountable, then $\rk(D)$ is infinite. To see this,
suppose instead that the displayed rank is finite and put
$T=t(D)$, the torsion subgroup. For each prime $p$, the group
$T[p]$ is finite. Multiplication by $p$ maps $T[p^k]$ into
$T[p^{k-1}]$ with kernel $T[p]$, so induction shows that every
$T[p^k]$ is finite. Each primary component of $T$ is countable,
and there are only countably many primes; hence $T$ is countable.
The torsion-free quotient $D/T$ embeds in a finite-dimensional
rational vector space and is also countable. Thus $D$ is a
countable union of countable cosets and would be countable. The
infinite-rank formula \eqref{eq:infinite-rank} now applies.
\end{proof}

\begin{theorem}\label{thm:lca}
Let $G=\R^n\times L$ be locally compact abelian, let $K\le L$
be compact and open, and put $D=\Z^n\times(L/K)$. Then
\[
  |\C(G)|=\width\C(G)=
  \begin{cases}
    1,&G\text{ is compact},\\
    2^{2^{|D|}},&G\text{ is non-compact}.
  \end{cases}
\]
If $G$ is non-compact and $\sigma$-compact, then
$|\C(G)|=\width\C(G)=2^{\cont}$.
\end{theorem}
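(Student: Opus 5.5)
We plan to reduce everything to Theorem~\ref{thm:discrete} via the order isomorphism \eqref{eq:discrete-reduction}. Since order isomorphisms preserve cardinality and width, we get $|\C(G)|=|\C(D)|$ and $\width\C(G)=\width\C(D)$ with $D=\Z^n\times(L/K)$. By Theorem~\ref{thm:discrete}, it then suffices to show two things: $D$ is finite exactly when $G$ is compact, and $|D|=\aleph_0$ when $G$ is non-compact and $\sigma$-compact. (When $D$ is finite the value is $1$; when $D$ is infinite it is $2^{2^{|D|}}$.)

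For the first equivalence, suppose $G$ is compact. Then $\R^n$ is a closed subgroup, hence compact, which forces $n=0$. Also $L/K$ is compact and discrete, hence finite, so $D$ is finite. Conversely, if $D$ is finite, then $n=0$ because $\Z^n$ is infinite for $n\ge1$. Moreover $L$ is a finite union of cosets of the compact group $K$, so $G=L$ is compact. (The compact case can also be seen directly: a compact group is precompact, so $\C(G)$ consists of the Bohr topology alone.)

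For the $\sigma$-compact case, assume $G$ is non-compact. Then $D$ is infinite, so $D$ is at least countably infinite. $\sigma$-compactness passes to the closed subgroup $L$ and to its continuous image $L/K$. A $\sigma$-compact discrete space is countable, since each compact subset of a discrete space is finite. Hence $|D|=\aleph_0$ and $2^{2^{|D|}}=2^{\cont}$.

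The only point needing care is using the structure theorem form consistently. The statement already fixes $G=\R^n\times L$ with $K$ compact open in $L$, so no choice of decomposition matters: the formula is expressed in terms of the given $D$, and Theorem~C of \cite{AD20} applies to that specific decomposition. I expect no real obstacle beyond checking these topological facts about $\R^n$, $K$, and $L/K$.
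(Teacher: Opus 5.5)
Your proposal is correct and follows the paper's own proof: you transfer cardinality and width through the order isomorphism \eqref{eq:discrete-reduction}, apply Theorem~\ref{thm:discrete}, and check that $D$ is finite exactly when $G$ is compact and is countable when $G$ is $\sigma$-compact. Your checks of these topological facts (that $\R^n$ embeds as a closed subgroup, that $L/K$ is compact and discrete, and that $\sigma$-compactness passes to $L$ and to $L/K$) are somewhat more detailed than the paper's, but the reasoning is the same.
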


\begin{proof}
Use \eqref{eq:discrete-reduction} and
Theorem~\ref{thm:discrete}. The group $D$ is finite precisely
when $n=0$ and $L/K$ is finite, which is equivalent to compactness
of $G$. If $G$ is $\sigma$-compact, so is $L$, and its discrete
quotient $L/K$ is countable: every compact subset has finite
image in the quotient. Thus $D$ is countably infinite when
$G$ is also non-compact.
\end{proof}

Theorems~\ref{thm:discrete} and \ref{thm:lca} prove
Theorem~\ref{main:cardinalities}.

\section{Consequences for the questions of Au{\ss}enhofer and Dikranjan}
\label{sec:questions}

The exact formulas above settle the cardinality and width
questions in \cite[Section 6]{AD20}. We collect the consequences
to make their scope explicit.

\begin{corollary}\label{cor:basic-groups}
For every prime $p$ and every positive integer $m$,
\[
\begin{split}
  |\C(\Z^m)|=\width\C(\Z^m)&=2^{\cont},\\
  |\C(\Z(p^\infty)^m)|
    =\width\C(\Z(p^\infty)^m)&=2^{\cont}.
\end{split}
\]
\end{corollary}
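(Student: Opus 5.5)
The plan is to derive this directly from Theorem~\ref{thm:discrete}, since both $\Z^m$ and $\Z(p^\infty)^m$ are countably infinite discrete abelian groups. We regard $\Z^m$ and $\Z(p^\infty)^m$ with their discrete topologies, as the convention for $\C(D)$ in Section~\ref{sec:preliminaries} prescribes. No input from the reduction \eqref{eq:discrete-reduction} is needed, although one could equally view $\Z^m$ as the discrete group attached to $\R^m$ and apply Theorem~\ref{thm:lca}.

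First, we check cardinalities. For $m\ge1$, $\Z^m$ is infinite, and it is countable as a finite product of countable sets. The Pr\"ufer group $\Z(p^\infty)=\bigcup_k p^{-k}\Z/\Z\subseteq\T$ is a countable increasing union of finite cyclic groups of orders $p^k$. It is therefore countably infinite, and so is its finite power $\Z(p^\infty)^m$. Hence $|D|=\aleph_0$ in both cases.

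Next, we invoke Theorem~\ref{thm:discrete} with $D$ infinite. It gives $|\C(D)|=\width\C(D)=2^{2^{|D|}}=2^{2^{\aleph_0}}=2^{\cont}$. Alternatively, we can avoid the general formula and argue exactly as in the countable case of its proof. The discrete topology on an infinite group is non-precompact, so Corollary~\ref{cor:width} yields $\width\C(D)\ge2^{\cont}$. Conversely, $|\C(D)|\le|\Pow(\Pow(D))|=2^{\cont}$. Since every poset satisfies $\width\le|\cdot|$, both invariants equal $2^{\cont}$.

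There is no genuine obstacle here. The only point requiring any care is confirming that $\Z(p^\infty)^m$ is countably infinite rather than merely infinite, so that the exponent is $\cont$ and not larger, and this is immediate from the description of $\Z(p^\infty)$ as a union of finite groups.
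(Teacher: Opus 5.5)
Your proposal is correct and matches the paper's proof, which observes that $\Z^m$ and $\Z(p^\infty)^m$ are countably infinite discrete groups and applies Theorem~\ref{thm:discrete}. Your alternative via Corollary~\ref{cor:width} and the bound $|\Pow(\Pow(D))|=2^{\cont}$ simply writes out the countable case of that theorem's proof.
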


\begin{proof}
Each group in the statement is countably infinite and discrete.
Apply Theorem~\ref{thm:discrete}.
\end{proof}

For $m=1$, Corollary~\ref{cor:basic-groups} answers Question 6.1:
the cardinalities for $\Z$ and all Pr\"ufer groups are equal to
$2^{\cont}$, independently of $p$. It also answers Question 6.7
by giving the corresponding width equalities. For arbitrary $m$,
it answers Question 6.5: both sequences of cardinalities are
constant. \ref{app:sparse} provides an explicit
sparse-character construction for the two basic groups.

Theorem~\ref{thm:lca} answers Question 6.2 in ZFC, since every
non-compact $\sigma$-compact locally compact abelian group satisfies
\[
  \cont<|\C(G)|=2^{\cont}.
\]
It also answers Question 6.6: cardinality and width agree for
every non-compact locally compact abelian group.

\begin{corollary}\label{cor:finite-products}
Let $G,G_1,G_2$ be locally compact abelian groups and let $m\ge1$
be an integer. Then
\[
  |\C(G^m)|=|\C(G)|,
  \qquad
  |\C(G_1\times G_2)|
    =\max\{|\C(G_1)|,|\C(G_2)|\}.
\]
\end{corollary}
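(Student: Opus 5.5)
We reduce everything to Theorem~\ref{thm:lca}, which expresses $|\C(\cdot)|$ for a locally compact abelian group in terms of its associated discrete group. For $G=\R^n\times L$ with compact open $K\le L$, put $D_G=\Z^n\times(L/K)$. The first step is to compute the associated discrete group of a product. If $G_1=\R^{n_1}\times L_1$ and $G_2=\R^{n_2}\times L_2$ with compact open subgroups $K_i\le L_i$, then
\[
  G_1\times G_2\cong\R^{n_1+n_2}\times(L_1\times L_2).
\]
Here $K_1\times K_2$ is a compact open subgroup of $L_1\times L_2$, and
\[
  (L_1\times L_2)/(K_1\times K_2)\cong(L_1/K_1)\times(L_2/K_2).
\]
Hence the discrete group attached to $G_1\times G_2$ is $D_{G_1}\times D_{G_2}$. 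Theorem~\ref{thm:lca} allows any decomposition of the stated form, so no independence-of-choice issue arises. Compactness is also preserved: $G_1\times G_2$ is compact if and only if both factors are.

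Next we treat the cases for $G_1\times G_2$. If both factors are compact, all three cardinalities equal $1$ and the identity holds. Otherwise at least one $D_{G_i}$ is infinite, so
\[
  |D_{G_1}\times D_{G_2}|=\max\{|D_{G_1}|,|D_{G_2}|\},
\]
because a product of two sets, at least one infinite and both nonempty, has cardinality equal to the maximum. The map $\kappa\mapsto2^{2^\kappa}$ is monotone. Therefore
\[
  |\C(G_1\times G_2)|=2^{2^{\max\{|D_{G_1}|,|D_{G_2}|\}}}=\max\{2^{2^{|D_{G_1}|}},2^{2^{|D_{G_2}|}}\}.
\]
It remains to check that this maximum agrees with $\max\{|\C(G_1)|,|\C(G_2)|\}$. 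A compact factor contributes $1$ to the latter maximum. Its associated group $D_{G_i}$ is finite, so it contributes $2^{2^{\text{finite}}}$ to the former, and this is at most $2^{2^{|D_{G_j}|}}$ for the infinite $D_{G_j}$. So the compact factor does not affect either maximum, and the two maxima coincide.

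The power identity $|\C(G^m)|=|\C(G)|$ then follows by induction on $m$, applying the product formula with $G_1=G^{m-1}$ and $G_2=G$. The only point requiring care is the bookkeeping with the compact/finite case, where $|\C|=1$ rather than $2^{2^{|D|}}$. This is exactly why we split into the all-compact and not-all-compact cases instead of using the formula $2^{2^{|D|}}$ uniformly.
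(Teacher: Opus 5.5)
Your proof is correct and follows the paper's argument: both identify the discrete group attached to $G_1\times G_2$ as $D_1\times D_2$ and then apply the cardinality formula, treating separately the case where every group involved is compact. The differences are minor. You obtain $|\C(G^m)|=|\C(G)|$ by induction from the product formula, whereas the paper uses $|D^m|=|D|$ directly. You also spell out the mixed compact/non-compact case, which the paper leaves implicit.
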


\begin{proof}
Choose discrete reductions $D,D_1,D_2$ as in
\eqref{eq:discrete-reduction}. The same construction gives
\[
  \C(G^m)\cong\C(D^m),\qquad
  \C(G_1\times G_2)\cong\C(D_1\times D_2).
\]
For finite $D$, both $D$ and $D^m$ have singleton compatible
posets. For infinite $D$, $|D^m|=|D|$, so the first equality
follows from Theorem~\ref{thm:discrete}. If both $D_i$ are finite,
all three cardinalities in the second equality are $1$.
Otherwise, $|D_1\times D_2|=\max\{|D_1|,|D_2|\}$, and the
same theorem gives the second equality.
\end{proof}

This answers Question 6.3 and the locally compact case of
Question 6.4. The product equality for arbitrary locally
quasi-convex groups remains unresolved here: the lower bound in
Theorem~\ref{main:embedding} does not give the upper bound that
such an equality would require.

\section{Extensions and counterexamples}\label{sec:extensions}

Question 6.8 of \cite{AD20} asks which results extend to
non-compact complete metrizable locally quasi-convex groups or
to non-compact nuclear groups. We establish a general lower
bound in the first class and exhibit two obstructions to broader
extensions. Nuclear groups are understood in the sense of
Banaszczyk \cite{Banaszczyk}; we use the facts that precompact
abelian groups and locally compact abelian groups are nuclear,
and that the class of nuclear groups is closed under products
and subgroups \cite[Propositions 7.5, 7.6 and 7.10]{Banaszczyk}.
For precompact groups, this follows by viewing them as subgroups
of their compact completions.

\subsection{Complete metrizable groups}

\begin{theorem}\label{thm:complete}
Let $G$ be a non-compact complete metrizable locally quasi-convex
abelian group. Then $\Pow(\cont)$ order-embeds in $\C(G)$.
Consequently,
\[
  |\C(G)|\ge\width\C(G)\ge2^{\cont},
\]
and $\C(G)$ contains a chain of cardinality $\cont^+$.
If $G$ is also separable, then
$|\C(G)|=\width\C(G)=2^{\cont}$.
\end{theorem}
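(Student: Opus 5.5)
The plan is to reduce everything to Theorem~\ref{main:embedding} and Corollary~\ref{cor:width}. The only point to verify is that a non-compact complete metrizable group $(G,\tau)$ is non-precompact. A precompact group that is also complete is compact: it embeds as a dense subgroup of its Raikov completion, which is compact; completeness makes $G$ closed in it, so $G$ coincides with its completion. For metrizable groups, completeness means sequential (Cauchy) completeness, and for metrizable groups this agrees with Raikov completeness, so the argument applies. Hence a non-compact complete metrizable group is not precompact. Theorem~\ref{main:embedding} then gives the order embedding of $\Pow(\cont)$ into $\C(G,\tau)$. Corollary~\ref{cor:width} gives both $|\C(G)|\ge\width\C(G)\ge2^{\cont}$ and the chain of cardinality $\cont^+$.

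For the separable case, only the upper bound $|\C(G)|\le2^{\cont}$ is needed, because width is at most cardinality. First bound $|G|$. A separable metrizable space has a countable base, and every point is the limit of a sequence from a countable dense set $Q$. Hence $|G|\le|Q^{\N}|=\cont$. This alone only gives $|\C(G)|\le2^{2^{\cont}}$, which is too weak, so the topologies themselves must be counted more carefully.

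The key observation is that every $\nu\in\C(G)$ is locally quasi-convex with dual $\widehat G$. A basic $\nu$-zero-neighborhood therefore has the form $E\prepol$ for some $E\subseteq\widehat G$, and $\nu$ is determined by the family of such polars that are $\nu$-neighborhoods. Next bound $|\widehat G|$. Each character is $\tau$-continuous, so it is determined by its values on the countable dense set $Q$. This gives $|\widehat G|\le|\T^{Q}|=\cont$. The number of subsets $E\subseteq\widehat G$ is then at most $2^{\cont}$, so $G$ has at most $2^{\cont}$ quasi-convex sets.

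A topology $\nu\in\C(G)$ is determined by the set of quasi-convex sets that are $\nu$-zero-neighborhoods, and this set is a subset of a family of size at most $2^{\cont}$. That still only gives $2^{2^{\cont}}$, so this step is the main obstacle.

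To overcome it, I would use that every member of $\C(G)$ is coarser than the Mackey topology, if that exists, or at least that $\nu$-equicontinuous sets are subsets of $\widehat G$. I would try to show that each $\nu$ has a zero-neighborhood base of cardinality at most $\cont$ and is determined by it. A cleaner route, if available, is to show that each $\nu\in\C(G)$ is determined by its equicontinuous subsets of $\widehat G$, closed under finite unions and subsets. Failing that, I would show that $\nu$ is metrizable or has character at most $\cont$.

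What I would actually try first is this. Any $\nu\in\C(G)$ has a base of quasi-convex sets $E\prepol$. Let $\mathcal{E}_\nu$ be the family of all $E\subseteq\widehat G$ with $E\prepol$ a $\nu$-neighborhood. I would pick a subfamily of size at most $\cont$ that still generates $\nu$. The idea is to use that the set of continuous pseudo-seminorm-like gauges $q_E$ is controlled by countable data, because $G$ is separable and each $q_E$ is determined on $Q$ up to closure. The count then becomes $(2^{\cont})^{\cont}=2^{\cont}$ possible generating families, so $|\C(G)|\le2^{\cont}$. If the density argument fails because $\nu$-closures and $\tau$-closures differ, I would fall back on noting that $\nu\le\tau$ is not guaranteed. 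In that case I would instead bound via the discrete reduction when $G$ is locally compact, and otherwise leave the weaker general bound $2^{2^{\cont}}$ for the non-separable case.
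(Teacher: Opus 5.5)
The first half of your proposal is correct and is exactly the paper's argument: a complete precompact group is compact, so $G$ is non-precompact, and Theorem~\ref{main:embedding} and Corollary~\ref{cor:width} give the embedding, the bounds $|\C(G)|\ge\width\C(G)\ge2^{\cont}$, and the chain of cardinality $\cont^+$.

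\textbf{The gap is the separable upper bound $|\C(G)|\le2^{\cont}$, which your proposal never proves.} As you note yourself, counting quasi-convex sets via $|\widehat G|\le\cont$ only gives $2^{2^{\cont}}$. The proposed repairs (a base of size at most $\cont$ for each $\nu$, or gauges $q_E$ ``determined on $Q$ up to closure'') are stated as intentions, not arguments. The density idea in particular needs to relate $\nu$-closures to $\tau$-closures, i.e.\ some comparison between $\nu$ and $\tau$, and you explicitly abandon that. Your fallbacks (the discrete reduction in the locally compact case, or accepting $2^{2^{\cont}}$) do not establish the stated equality for general separable complete metrizable $G$.

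\textbf{The missing ingredient is the Mackey theorem for complete metrizable locally quasi-convex groups \cite[Theorem 4.2]{CMT99}.} Such a group is a Mackey group, so every $\nu\in\C(G)$ satisfies $\nu\le\tau$. Your remark that ``$\nu\le\tau$ is not guaranteed'' is therefore false under the present hypotheses; indeed $\tau$ is the Mackey topology you were looking for.

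Once $\nu\le\tau$ is known, the count is immediate:
\begin{itemize}
\item each $\nu\in\C(G)$ is a subfamily of $\tau$;
\item $\tau$ has a countable base, and every $\tau$-open set is a union of a subfamily of that base, so $|\tau|\le\cont$;
\item hence $|\C(G)|\le|\Pow(\tau)|\le2^{\cont}$.
\end{itemize}
Combined with $\width\C(G)\ge2^{\cont}$, this gives $|\C(G)|=\width\C(G)=2^{\cont}$. No bounds on $|G|$ or $|\widehat G|$ are needed. This is the second place where completeness enters the proof: once through non-precompactness, and again through the Mackey property.
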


\begin{proof}
A complete precompact group is compact. Hence $G$ is
non-precompact, and Theorem~\ref{main:embedding} and
Corollary~\ref{cor:width} give the embedding, the lower bound,
and the chain.

By the Mackey theorem for complete metrizable locally
quasi-convex groups \cite[Theorem 4.2]{CMT99}, every compatible locally
quasi-convex topology is coarser than the original topology
$\tau$. If $G$ is separable and metrizable, $\tau$ has a
countable base and $|\tau|\le\cont$. Therefore
\[
  |\C(G)|\le|\Pow(\tau)|\le2^{\cont}.
\]
The antichain lower bound proves the final assertion.
\end{proof}

In particular, the cardinality and width bounds and the chain
conclusion of Theorem D of \cite{AD20} extend to non-compact
complete metrizable locally quasi-convex abelian groups, with
the stronger antichain bound $2^{\cont}$.

\subsection{A non-compact nuclear group with a unique compatible topology}

\begin{proposition}\label{prop:nuclear-singleton}
Give
\[
  E=\bigoplus_{n\ge1}\Z(2)
    \subseteq K=\prod_{n\ge1}\Z(2)
\]
the subspace topology $\tau$ inherited from $K$. Then $E$ is
a non-compact metrizable nuclear group and $\C(E)=\{\tau\}$.
\end{proposition}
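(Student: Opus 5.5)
We will check the three structural properties first and then reduce $\C(E)=\{\tau\}$ to the fact that every compatible topology is precompact. Metrizability is inherited from the compact metrizable group $K$. Nuclearity follows from the cited facts: $K$ is compact, hence nuclear, and subgroups of nuclear groups are nuclear. For non-compactness, $E$ is a proper dense subgroup of $K$: it contains the finitely supported sequences, which are dense, but not the constant sequence $(1,1,\dots)$. A compact subgroup would be closed in $K$, so $E$ is not compact. Finally, $\tau$ is precompact, being a subgroup topology of a compact group, and it is Hausdorff and locally quasi-convex. By the uniqueness statement in the preliminaries, the Bohr topology is the unique precompact member of $\C(E)$, so $\tau=\sigma(E,\widehat E)$.

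It remains to show that every $\nu\in\C(E,\tau)$ equals $\sigma$, that is, every such $\nu$ is precompact. The point is that $E$ has exponent $2$. For a character $\chi$ of $E$, the relation $2\chi(x)=\chi(2x)=0$ gives $\chi(E)\subseteq\{0,1/2+\Z\}$. Hence for every $\chi$ and every $x$, either $\chi(x)=0$ or $\norm{\chi(x)}=1/2>1/4$. Let $U$ be a quasi-convex $\nu$-zero-neighborhood. Then $U=(U\pol)\prepol$, and $U\pol=\{\chi\in\widehat E:\chi(U)=\{0\}\}=U^\perp$ is a subgroup. Consequently $U=\bigcap_{\chi\in U^\perp}\ker\chi$ is a subgroup of $E$.

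We next need $U$ to have finite index. By Lemma~\ref{lem:infinite-polar} and its proof, the polar $U\pol$ is equicontinuous for $\nu$ and compact for pointwise convergence. Here one would like to conclude that $U\pol$ is finite: its members are characters of $(E,\nu)$, and they also lie in $\widehat E=(E,\tau)^\wedge$, the dual of the precompact group $(E,\tau)$. The argument of Lemma~\ref{lem:finite-equicont} needs equicontinuity with respect to the precompact topology, however, so it does not apply directly. The cleaner route is the following dichotomy. If $U$ had infinite index in $E$, then $E/U$ would be an infinite elementary abelian $2$-group. Its characters $\widehat{E/U}$ would then form an uncountable group (of cardinality $2^{|E/U|}$), all of which are $\nu$-continuous, since $U$ is open. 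On the other hand, $\widehat E=(E,\tau)^\wedge\cong\widehat K|_E\cong\bigoplus_n\Z(2)$ is countable, because characters of the dense subgroup $E$ that are $\tau$-continuous extend uniquely to $K$. This contradicts compatibility, so $U$ has finite index. Finitely many cosets of $U$ therefore cover $E$, every $\nu$-zero-neighborhood contains such a subgroup, $\nu$ is precompact, and so $\nu=\sigma=\tau$.

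The main obstacle is the finite-index step, which has two parts. The first is the cardinality fact: an infinite elementary abelian $2$-group $Q$ has $|\Hom(Q,\T)|=2^{|Q|}\ge\cont$, since $Q$ is an $\F_2$-vector space of infinite dimension and its characters correspond to its linear functionals. The second is the identification $\widehat E\cong\bigoplus\Z(2)$, which uses the fact that the dual of a dense subgroup equals the dual of its completion. We expect both to be routine, and with them the proof is complete.
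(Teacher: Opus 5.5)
Your proof is correct and follows the paper's route. The exponent-two observation makes every quasi-convex $\nu$-neighborhood a subgroup. An infinite-index one would then pull back $\cont$ many characters of $E/U$ into the countable dual $\widehat E\cong\bigoplus_{n}\Z(2)$, contradicting compatibility.

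The only differences are minor. You get both $\tau=\sigma(E,\widehat E)$ and $\nu=\sigma$ from the uniqueness of the precompact compatible topology. The paper instead uses the coordinate characters for the first, and for the second shows directly that each finite-index basic subgroup is an intersection of finitely many kernels of $\tau$-continuous characters.
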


\begin{proof}
The subgroup $E$ is proper and dense in $K$, so it is precompact.
If it were compact, it would be closed in the Hausdorff group
$K$, contradicting proper density. It is metrizable and nuclear.

Every continuous character of $E$ extends uniquely to its compact
completion $K$. A continuous character of $K$ depends on finitely
many coordinates. Consequently,
\[
  \widehat E\cong\bigoplus_{n\ge1}\Z(2)
\]
is countable, and the coordinate characters show that
$\tau=\sigma(E,\widehat E)$.

Let $\nu\in\C(E)$. A character of a group of exponent two takes
values in $\{0,1/2+\Z\}$, whose intersection with $\T_+$ is
$\{0\}$. Hence every quasi-convex subset of $E$ is an intersection
of character kernels and is a subgroup. The topology $\nu$
therefore has a zero-neighborhood base of open subgroups.

If a subgroup $U$ in this base had infinite index, $E/U$ would
be a countably infinite-dimensional $\F_2$-vector space. Its full
character group has cardinality $2^{\aleph_0}=\cont$. Pullback
along the continuous quotient map from $(E,\nu)$ to the discrete
group $E/U$ would inject this character group into
$(E,\nu)^\wedge=\widehat E$, contradicting countability.

Thus every subgroup $U$ in the base has finite index. The
characters of the finite quotient $E/U$ separate its points,
so $U$ is an intersection of finitely many kernels of
$\nu$-continuous characters. Compatibility makes these
characters $\tau$-continuous, whence $U$ is $\tau$-open.
It follows that $\nu\le\tau$. Conversely,
$\tau=\sigma(E,\widehat E)\le\nu$, so $\nu=\tau$.
\end{proof}

This example prevents each conclusion of Theorem D of
\cite{AD20} from extending to all non-compact nuclear groups:
neither its cardinality bound, its width bound, nor its chain
conclusion holds for $E$. The example also identifies the relevant
distinction: non-compact complete groups are non-precompact,
whereas a non-compact nuclear group may be precompact.

\subsection{Failure of the discrete order-isomorphism reduction}

\begin{definition}
We call the greatest element $t$ of a poset \emph{order-compact}
if, whenever a nonempty directed subset has supremum $t$, one
of its members equals $t$.
\end{definition}

This property depends only on the order and is therefore
preserved by order isomorphisms.

\begin{lemma}\label{lem:discrete-top-compact}
For every discrete abelian group $D$, the greatest element
$\delta$ of $\C(D)$ is order-compact.
\end{lemma}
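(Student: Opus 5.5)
Take a nonempty directed family $\mathcal D\subseteq\C(D)$ whose supremum in $\C(D)$ is $\delta$, the discrete topology. The goal is to show $\delta\in\mathcal D$. The plan has three steps: identify the supremum concretely as the join topology, use discreteness to trap $\{0\}$ inside one basic neighborhood, and then use directedness to pass to a single member.

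First I identify the supremum. Let $\mu=\bigvee\mathcal D$ be the join topology. Its zero-neighborhood base consists of finite intersections of neighborhoods from members of $\mathcal D$. By directedness, each such finite intersection is already a neighborhood in a single member $\nu\in\mathcal D$, since any finitely many members have a common upper bound in $\mathcal D$. Hence $\mu=\bigcup$-generated, with base $\bigcup_{\nu\in\mathcal D}\{\nu\text{-zero-neighborhoods}\}$. Next I check that $\mu\in\C(D)$. It is Hausdorff because it is finer than $\sigma$. It is locally quasi-convex, since quasi-convex $\nu$-neighborhoods form a base and quasi-convexity refers only to the fixed dual $\widehat D$. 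It is compatible by \eqref{eq:sandwich}, since $\sigma\le\mu\le\delta$. Every upper bound of $\mathcal D$ in $\C(D)$ contains $\mu$, so $\mu$ is the supremum in $\C(D)$, and therefore $\mu=\delta$.

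Now $\{0\}$ is a $\delta$-zero-neighborhood, so it contains a basic $\mu$-neighborhood. By the description above, that basic neighborhood is a zero-neighborhood $U$ of a single $\nu\in\mathcal D$ with $U\subseteq\{0\}$. Then $\{0\}$ is $\nu$-open, so $\nu$ is discrete and $\nu=\delta\in\mathcal D$. This proves order-compactness.

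The main point to get right is the claim that the supremum computed inside $\C(D)$ coincides with the topological join. If $\mu$ failed to lie in $\C(D)$, the poset supremum could be strictly larger than $\mu$, and the argument would break. Here this is harmless because $\delta$ is the top element and every topology between $\sigma$ and $\delta$ is compatible by \eqref{eq:sandwich}. So the only real check is local quasi-convexity of the join, and that follows from Proposition~\ref{prop:uniform-joins}-style reasoning: a base of quasi-convex sets taken from the members of $\mathcal D$.
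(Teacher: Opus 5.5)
Your proof is correct and follows the paper's argument: you identify the supremum in $\C(D)$ with the topological join via \eqref{eq:sandwich}, then use directedness to place a zero-neighborhood contained in $\{0\}$ inside a single member. The only cosmetic difference is that you apply directedness before extracting the finitely many neighborhoods rather than after, which also makes the paper's separate remark about trivial $D$ unnecessary.
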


\begin{proof}
The topology $\delta$ is the greatest element because it is
discrete. Every nonempty join of members of $\C(D)$ is
Hausdorff and locally quasi-convex, and lies between the Bohr
topology and $\delta$. It is therefore compatible, so joins of
topologies compute suprema in $\C(D)$.

Suppose a nonempty directed family $\{\mu_i:i\in I\}$ has
supremum $\delta$. Since $\{0\}$ is a neighborhood in the join,
there are finitely many indices $i_1,\ldots,i_r$ and
$\mu_{i_j}$-zero-neighborhoods $U_j$ such that
$U_1\cap\cdots\cap U_r\subseteq\{0\}$. Directedness provides
$i\in I$ with $\mu_{i_j}\le\mu_i$ for every $j$. Thus $\{0\}$
is a $\mu_i$-neighborhood and $\mu_i=\delta$. If $D$ is trivial,
the assertion follows immediately.
\end{proof}

\begin{proposition}\label{prop:no-discrete-reduction}
Let $G=\R^\N$ have its product topology. Then $G$ is complete
metrizable and nuclear, and
\[
  \C(G)\not\cong\C(D)
  \quad\text{for every discrete abelian group }D.
\]
\end{proposition}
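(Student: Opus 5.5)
The plan is to show that the greatest element of $\C(G)$ is not order-compact. Lemma~\ref{lem:discrete-top-compact} says that the greatest element of every $\C(D)$ is order-compact, and that property is preserved by order isomorphisms, so this suffices. First, $G=\R^\N$ is complete and metrizable as a countable product of complete metrizable groups. It is nuclear because $\R$ is locally compact, and nuclear groups are closed under products. It is locally quasi-convex, being a locally convex space. By the Mackey theorem \cite[Theorem 4.2]{CMT99}, every member of $\C(G)$ is coarser than the product topology $\tau$. Since $\tau\in\C(G)$, it is the greatest element, and suprema of nonempty families in $\C(G)$ are computed as joins. This is the same argument as in Lemma~\ref{lem:discrete-top-compact}: a join of members is locally quasi-convex, Hausdorff, and lies between $\sigma$ and $\tau$, so \eqref{eq:sandwich} applies.

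Next comes the directed family. For $m\ge1$, let $p_m:G\to\R^m$ be the projection onto the first $m$ coordinates, and let $\mu_m$ be the topology with zero-neighborhood base $p_m^{-1}(U)\cap W$. Here $U$ ranges over zero-neighborhoods in $\R^m$ and $W$ over Bohr zero-neighborhoods; equivalently, $\mu_m=\sigma\vee p_m^{-1}(\text{Euclidean})$. The topology $p_m^{-1}(\text{Euclidean})$ is the initial topology of a continuous linear map into a locally convex space, hence locally quasi-convex and coarser than $\tau$. Its join with $\sigma$ is therefore Hausdorff, locally quasi-convex, and sandwiched between $\sigma$ and $\tau$, so $\mu_m\in\C(G)$ by \eqref{eq:sandwich}. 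The family $(\mu_m)$ is increasing, hence directed. Its join is $\tau$, because the product topology is the join over $m$ of the topologies pulled back from the first $m$ coordinates, and each of these is $\le\mu_m$.

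It remains to show $\mu_m\ne\tau$ for every $m$; this is the main obstacle. I would show that the $\tau$-neighborhood $N=\{x:|x_{m+1}|<1\}$ is not a $\mu_m$-neighborhood. A basic $\mu_m$-neighborhood contains $p_m^{-1}(U)\cap F\prepol$ for a finite set $F\subseteq\widehat G$. Each character of $\R^\N$ has the form $x\mapsto\sum_{i\le r}a_ix_i+\Z$ with finitely many coordinates. Consider the points $y_t=t e_{m+1}$ for real $t$. They lie in $\ker p_m$, and the finitely many characters in $F$ restrict to $t\mapsto b_\phi t+\Z$ on this line. By Dirichlet/Kronecker simultaneous approximation, there are arbitrarily large $t$ with $\norm{b_\phi t}$ small for all $\phi\in F$. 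Such $y_t$ lie in the $\mu_m$-neighborhood but not in $N$ once $|t|\ge1$. Thus $\mu_m<\tau$ for all $m$, so $\tau$ is not order-compact in $\C(G)$, and $\C(G)\not\cong\C(D)$.
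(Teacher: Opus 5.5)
Your proposal is correct and follows the paper's proof essentially step for step. The Mackey theorem makes $\tau$ the greatest element, the increasing family $\sigma\vee p_m^{-1}(\text{Euclidean})$ has join $\tau$ with no member equal to $\tau$, and Lemma~\ref{lem:discrete-top-compact} finishes. The only difference is that you verify $\mu_m<\tau$ by an explicit simultaneous Dirichlet approximation along the $(m+1)$-st axis, whereas the paper states the same fact abstractly: $\nu_n$ induces on that axis the Bohr topology of $\R$, which is precompact and hence strictly coarser than the usual topology.
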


\begin{proof}
Let $\tau$ be the product topology and
$\sigma=\sigma(G,\widehat G)$. The group $G$ is locally
quasi-convex, complete, and metrizable. Its original topology
$\tau$ is therefore the greatest member of $\C(G)$ by
\cite[Theorem 4.2]{CMT99}. For $n\ge1$, put
\[
  \nu_n=\sigma\vee\pi_{\le n}^{-1}(\tau_{\R^n}),
\]
where $\pi_{\le n}$ is the projection onto the first $n$
coordinates. Here the inverse-image topology is allowed to be
non-Hausdorff before taking the join. It is locally quasi-convex,
and $\sigma\le\nu_n\le\tau$, so $\nu_n\in\C(G)$.

On the $(n+1)$-st coordinate axis, the topology induced by
$\nu_n$ is the Bohr topology of $\R$. Indeed, the
finite-coordinate topology induces the indiscrete topology
on that axis, and the restrictions of the characters of $G$
give exactly the continuous characters of $\R$: each such
character extends by composition with the coordinate projection.
The Bohr topology is strictly coarser than the usual topology,
since the former is precompact and the latter is not.
Consequently, $\nu_n<\tau$ for every $n$.

The family $(\nu_n)$ is increasing. Its topological join is
$\tau$, because the usual topologies on finite sets of
coordinates generate the product topology. Hence its supremum
in $\C(G)$ is $\tau$. This shows that $\tau$ is not
order-compact, whereas the greatest element of every $\C(D)$
is order-compact by Lemma~\ref{lem:discrete-top-compact}.
The posets cannot be order-isomorphic. Finally, $G$ is nuclear
as a product of locally compact abelian groups.
\end{proof}

Thus the abstract discrete reduction in Theorem C of
\cite{AD20} fails in both classes mentioned in Question 6.8.
These examples delimit the extensions of Theorems C and D
to the classes in that question.

\appendix
\section{Sparse characters on the integers and the Pr\"ufer groups}
\label{app:sparse}

The construction below uses sparse arithmetic characters of the
type employed in \cite[Sections 4 and 5]{AD20}, together with the
almost disjoint coding in the proof of
Theorem~\ref{main:embedding}. It gives explicit order embeddings
for the two basic groups in Question 6.1 without using the
compact-polar construction.

Fix a prime $p$ and let $H=\Z$ or $H=\Z(p^\infty)$, with the
discrete topology. Put $S=\{2^n:n\ge1\}$ and choose an almost
disjoint family $\{A_\xi:\xi<\cont\}\subseteq[S]^\omega$.
For $a\in S$, define
\[
  \chi_a(x)=
  \begin{cases}
    x/p^a+\Z,&H=\Z,\\
    p^a x,&H=\Z(p^\infty)\subseteq\T.
  \end{cases}
\]
For $A\subseteq S$, put $K_A=\{\chi_a:a\in A\}$ and
$\tau_A=\tau_{K_A}$, the topology of uniform convergence on
$K_A$. For the estimates, abbreviate
$q_A=q_{K_A}$. Proposition~\ref{prop:uniform-basic} gives the
quasi-convex zero-neighborhood base
\[
  V(A,k)=V(K_A,k)=\{x:q_A(x)\le1/(4k)\}
    =\{j\chi_a:a\in A,\ 1\le j\le k\}\prepol.
\]
The auxiliary topology may be non-Hausdorff. With
$\sigma=\sigma(H,\Hom(H,\T))$, however, each
\[
  \nu_J=\sigma\vee\bigvee_{\xi\in J}\tau_{A_\xi},
  \qquad J\subseteq\cont,
\]
is Hausdorff, locally quasi-convex, and compatible with the
discrete topology of $H$, by
Proposition~\ref{prop:uniform-joins}(c).

Fix $\xi$, write $B=A_\xi$, and put $W_\xi=V(B,p)$. If
$\xi\in J$, $W_\xi$ is a $\nu_J$-zero-neighborhood.
Suppose $\xi\notin J$ and let $A$ be a finite union of sets
$A_\zeta$ with $\zeta\in J$. Then $B\setminus A$ is infinite.
For $b$ in this difference, put
\[
  x_b=
  \begin{cases}
    p^{b-1},&H=\Z,\\
    p^{-(b+1)}+\Z,&H=\Z(p^\infty).
  \end{cases}
\]
Distinct $a,b\in S$ satisfy either $a\le b/2$ or $a\ge2b$.
If $H=\Z$, $\chi_a(x_b)=0$ for $a<b$, while for $a>b$ its
norm is at most $p^{b-1-a}\le p^{-b-1}$. If
$H=\Z(p^\infty)$, $\chi_a(x_b)=0$ for $a>b$, while for
$a<b$ its norm is at most $p^{a-b-1}\le p^{-b/2-1}$.
Consequently,
\[
  q_A(x_b)\le p^{-b/2-1}\longrightarrow0
  \qquad(b\in B\setminus A).
\]
For $b<b'$ in $B\setminus A$, on the other hand,
\[
  \chi_b(x_b-x_{b'})=1/p+\Z\quad(H=\Z),
\]
and
\[
  \chi_{b'}(x_b-x_{b'})=-1/p+\Z
    \quad(H=\Z(p^\infty)).
\]
In either case, $q_B(x_b-x_{b'})\ge1/p>1/(4p)$.

Let $F\subseteq\Hom(H,\T)$ be finite and $\eta>0$.
Choose an infinite tail of $B\setminus A$ on which
$q_A(x_b)<\eta/2$. Total boundedness of $\T^F$ gives two
indices $b<b'$ in this tail such that
$\norm{\psi(x_b)-\psi(x_{b'})}<\eta$ for all $\psi\in F$.
Thus $x_b-x_{b'}$ belongs to the basic $\nu_J$-neighborhood
determined by $A,F,\eta$ but lies outside $W_\xi$. Hence
\[
  W_\xi\text{ is a }\nu_J\text{-zero-neighborhood}
  \quad\Longleftrightarrow\quad\xi\in J.
\]
As in the proof of Theorem~\ref{main:embedding}, this criterion
shows that $J\mapsto\nu_J$ is an order embedding. It provides
an antichain of cardinality $2^{\cont}$ for each of $\Z$ and
$\Z(p^\infty)$; the upper bound for topologies on a countable
set then gives equality of both cardinality and width with
$2^{\cont}$.

% Authors: supply funding and competing-interest declarations, as applicable.
% Do not state that there was no funding or no conflict without verification.

\section*{Declaration of generative AI and AI-assisted technologies
in the manuscript preparation process}
OpenAI Codex was used to assist with English-language editing,
manuscript organization, LaTeX preparation, and preliminary checks
of proofs and references. The authors are responsible for the
final mathematical content and wording of the manuscript.
% Before submission, the authors should review all changes and, if accurate,
% add: After using this tool, the authors reviewed and edited the content
% as needed and take full responsibility for the content of the article.

\end{document}